\documentclass[12pt]{article}

\usepackage[hidelinks, colorlinks]{hyperref}
\usepackage{amsmath,amsthm, amssymb, enumitem, microtype}
\DeclareRobustCommand \d{%
  \ifmmode \mathinner{\mathrm d}\mathchoice{\mskip -\thinmuskip }{\mskip -\thinmuskip }{}{}{}\else\fi
}
\allowdisplaybreaks
\theoremstyle{definition}

\theoremstyle{plain}
\newtheorem{theorem}{Theorem}[section]

\newtheorem{lemma}{Lemma}[section]

\newtheorem{proposition}{Proposition}[section]

\numberwithin{equation}{section}

\usepackage{graphicx}

\title{Normalized solutions of quasilinear Schr\"odinger-Poisson system with critical nonlinear term in bounded domain}

\author{Li Chen,\enspace
  Li Wang\thanks{Corresponding author. chenlimath0318@163.com (L. Chen), wangli.423@163.com (L. Wang).}\\{\small
	College of science, East China Jiaotong University, Nanchang, 330013, China}}
\date{}

\begin{document}

\maketitle

\begin{abstract}

This work examines a quasilinear Schr\"odinger-Poisson system involving a critical nonlinearity, expressed as
\[\begin{cases}
-\Delta u + \phi u + \lambda u = |u|^{q-2} u + |u|^4 u, & x \in \Omega_r, \\
-\Delta \phi - \varepsilon^4 \Delta_4 \phi = u^2, & x \in \Omega_r, \\
u = \phi = 0, & x \in \partial \Omega_r
\end{cases}\]
subject to the normalized condition
\[
\int_{\Omega_r} |u|^2 \d x = b^2.
\]
Here $\varepsilon > 0$, $q \in (2, 8/3)$, $\Omega_r \subset \mathbb R^3$ is a bounded domain.
By means of a truncation method combined with genus theory, we establish the existence of multiple families of normalized solutions. Due to the presence of a critical exponent in the nonlinear term, the associated energy functional fails to satisfy the usual compactness properties. To address this issue, we invoke the concentration-compactness principle. Furthermore, we derive the asymptotic result that the aforementioned system reduces to the classical Schr\"odinger-Poisson system (with $\varepsilon = 0$).
Our findings extend several recent results concerning problems of this type.
\end{abstract}

\begingroup
  \small\textbf{Keywords:}
  Normalized solutions, quasilinear Schr\"odinger Poisson equation, concentration compactness, multiplicity.

  \textbf{MSC 2020:} 35J50, 35J92, 35J60, 35J10.
\endgroup

\section{Introduction and main results}

This paper focuses on the analysis of the following coupled system:
\begin{equation}\label{eq:1.1}
  \begin{cases}
    -\Delta u + \phi u + \lambda u = |u|^{q-2}u + |u|^4u, & x \in \Omega_r, \\
    -\Delta \phi - \varepsilon^4 \Delta_4 \phi = u^2, & x \in \Omega_r, \\
    u = \phi = 0, & x \in \partial\Omega_r
  \end{cases}
\end{equation}
subject to the mass constraint
\begin{equation}\label{eq:1.2}
  \int_{\Omega_r} u^2 \d x = b^2.
\end{equation}
In the above, $\varepsilon$ is a positive constant, and $\Delta_4 \phi$ represents the 4-Laplacian operator defined by $\Delta_4 \phi = \text{div}(|\nabla \phi|^2 \nabla \phi)$. The quantity $\lambda$ serves as an unknown frequency parameter, emerging as a part of the solution. Equations of the form~\eqref{eq:1.1} arise in the modeling of quantum phenomena within semiconductor nanostructures of ultra-small scale, where interactions between quantum confinement and longitudinal electric field oscillations during beam propagation are considered.
For more corresponding physical background, please refer to~\cite{tmp1,tmp21}.
To the best of our knowledge, the first rigorous mathematical treatment of systems sharing structural similarities with~\eqref{eq:1.1} was presented in the works of~\cite{Illner1998} and~\cite{Illner1997}. Later on, this category of coupled equations was designated as the quasilinear Schr\"odinger-Poisson system, representing a generalized framework that extends the scope of the traditional Schr\"odinger-Poisson system.

Illner, Kavian and Lange in~\cite{Illner1998} investigated a quasi-linear Schr\"odinger-Poisson system within unit cube $Q = [0,1]^v$ ($v = 1$, $2$, $3$) to model quantum structures in semiconductors. The evolution of the system is governed by the following coupled equations:
\begin{equation}\label{eq:picirc1}
  \begin{cases}
    i \partial_t \psi_m = -\frac12 \Delta \psi_m + (V + \tilde V) \psi_m, & (x,t) \in Q \times [0, +\infty),\\
    -\nabla \cdot ((\varepsilon_0 + \varepsilon_1 |\nabla V|^2)\nabla V) = n - n^*, & (x,t) \in Q \times [0, +\infty),
  \end{cases}
\end{equation}
where the charge density is defined by $n(x, t) = \sum_{m=1}^\infty \lambda_m |\psi_m|^2$, $n^*$ and $\tilde V$ are given time-independent $1$-periodic functions describing a dopant density and effective exterior potential.
Regarding the methodology, by seeking stationary solutions of the form $\psi_m(x, t) = e^{-i\omega_m t}\varphi_m(x)$, the authors reduced the system~\eqref{eq:picirc1} to a nonlinear eigenvalue problem. Through an operator-theoretic analysis of the quasi-linear Poisson equation and by treating the potential $V$ as a functional of $\varphi_m$, they constructed the energy functional in appropriate Sobolev spaces. By applying genus theory within a variational framework and rigorously verifying that the functional satisfies the Palais-Smale (PS) compactness condition, they proved the existence of infinitely many distinct stationary states on a unit cube, with the corresponding eigenfrequencies satisfying $\omega_m \to \infty$.
Later,
Benmilh and Kavian in~\cite{Benmlih2008} investigated the quasilinear Schr\"odinger-Poisson system defined on $\mathbb{R}^3$:
\begin{equation}\label{eq:1.3}
  \begin{cases}
    -\frac{1}{2}\Delta u + (V + \tilde V)u + \omega u = 0, & x \in \mathbb R^3,\\
    -\operatorname{div}\bigl[(1 + \varepsilon^4 |\nabla V|^2)\nabla V\bigr] = |u|^2 - n^*, & x \in \mathbb R^3,
  \end{cases}
\end{equation}
where $\varepsilon > 0$ is a nonlinearity parameter. Under suitable assumptions on the potential $\tilde V$ and the doping density $n^*$, by minimizing the corresponding energy functional $E_\varepsilon$, the authors proved the existence of a nontrivial ground state solution $(u_\varepsilon, V_\varepsilon)$ for all sufficiently small $\varepsilon > 0$. Furthermore, they analyzed the asymptotic behavior of the solutions as $\varepsilon \to 0^+$ and showed that $(u_\varepsilon, V_\varepsilon)$ converges to $(u_0, V_0)$, the ground state solution of the corresponding system with $\varepsilon = 0$ (i.e., with a linear Poisson equation). The main innovations of this work lie in handling the difficulties raised by the nonlinear term $|\nabla V|^2 \nabla V$ in the Poisson equation on an unbounded domain, and in establishing the convergence of solutions from the nonlinear system to the linear one.
When periodic boundary conditions are enforced, the system~\eqref{eq:1.3} in bounded domain was further explored in~\cite{Illner1998}, where the Krasnoselskii genus method was used to derive infinitely many $L^2$-normalized solutions.

Figueiredo and Siciliano in~\cite{Figueiredo2020} examined the following quasilinear Schr\"odinger-Poisson system featuring critical growth nonlinearity,
\begin{equation}\label{eq:1.4}
\begin{cases}
-\Delta u + u + \phi u = \lambda f(x,u) + |u|^4u, & x \in \mathbb{R}^3,\\
-\Delta \phi - \varepsilon^4 \Delta_4 \phi = u^2, & x \in \mathbb{R}^3.
\end{cases}
\end{equation}
Here, the nonlinear term $f$ meets subcritical growth and the Ambrosetti-Rabinowitz (AR) condition. To overcome the difficulties arising from the lack of favorable properties of the nonlocal term and the loss of compactness in the whole space, a truncation technique is introduced to construct a modified functional, which restores compactness within a finite energy range. Subsequently, the geometric structure of the mountain pass theorem is verified for the truncated functional, yielding a bounded (PS) sequence. In order to extract a strongly convergent subsequence from this sequence, Lions' concentration compactness principle is applied to analyze possible energy concentration phenomena, thereby establishing the existence of solutions.
The authors demonstrated the existence of a threshold value $\lambda^* > 0$ such that for all $\lambda \geq \lambda^*$ and $\varepsilon > 0$, the system has a nonnegative solution $(u_{\lambda,\varepsilon}, \phi_{\lambda,\varepsilon})$. Additionally, as $\lambda \to +\infty$, the norms of solution approach zero, and as $\varepsilon \to 0^+$, the solution family converges to a solution of the classical Schr\"odinger-Poisson system (when $\varepsilon = 0$). The key contributions of this work include addressing the lack of compactness and the absence of an explicit solution formula introduced by the quasilinear term, achieved by constructing a truncated functional and combining variational techniques with the concentration-compactness principle. This methodology not only establishes existence and asymptotic results for the critical-growth case in the entire space but also applies to supercritical nonlinearities.

For quasilinear Schr\"odinger-Poisson systems that violate the (AR) condition (and thus differ from system~\eqref{eq:1.4}), constructing bounded (PS) sequences cannot be achieved through conventional variational methods. Relevant discussions on this challenge are presented in~\cite{Wei2022} and~\cite{Wei2025}.

Quasilinear Schr\"odinger-Poisson systems defined on bounded domains have also drawn significant attention; in~\cite{Figueiredo2019}, Figueiredo and Siciliano analyzed the following system on a bounded two-dimensional domain:
\[
\begin{cases}
  - \Delta u + \phi u = f(u), & x \in \Omega, \\
  - \Delta \phi - \varepsilon^4 \Delta_4 \phi = u^2, & x \in \Omega, \\
  u = \phi = 0, & x \in \partial \Omega,
\end{cases}
\]
where $f$ satisfies exponential critical growth criteria. Under suitable assumptions, the authors used variational methods and the mountain pass theorem, introducing a pivotal truncated functional $J_\varepsilon^T$ to address challenges such as the lack of compactness induced by the quasilinear term and the strong indefiniteness of the energy functional, they first showed that for every $\varepsilon > 0$, the system has at least one nonnegative solution ($u_\varepsilon$, $\phi_\varepsilon$). Additionally, as $\varepsilon \to 0^+$, this solution sequence converges to a nontrivial solution of the corresponding classical Schr\"odinger-Poisson system (with $\varepsilon = 0$). 

Recently, Peng, Jia and Huang in~\cite{peng2022quasilinear} investigated a class of quasilinear Schr\"odinger-Poisson systems with exponential critical growth and logarithmic nonlinearity:
\[
\begin{cases}
  - \Delta u + \phi u = |u|^{p-2}u \log |u|^2 + \lambda f(x,u), & x \in \Omega, \\
  - \Delta \phi - \varepsilon^4 \Delta_4 \phi = u^2, & x \in \Omega, \\
  u = \phi = 0, & x \in \partial \Omega
\end{cases}
\]
where $\Omega \subset \mathbb{R}^2$ is a bounded domain. $f$ satisfies some proper technique conditions. Under suitable assumptions, the authors employed variational methods, a reduction argument, and a truncation technique to overcome difficulties arising from the loss of homogeneity due to the quasilinear term, the sign-changing logarithmic term, and the lack of compactness caused by the exponential critical growth. They proved that for sufficiently large $\lambda$, the system admits at least one pair of nonnegative solutions $(u_{\varepsilon,\lambda}, \phi_{\varepsilon,\lambda})$, and further analyzed the asymptotic behavior of the solutions with respect to the parameters $\lambda \to +\infty$ and $\varepsilon \to 0^+$.

Regarding normalized solutions of system \eqref{eq:1.1}, the literature remains sparse. In our study, two principal challenges arise: first, the energy functional corresponding to \eqref{eq:1.1}-\eqref{eq:1.2} does not possess an extremal geometric structure, which prevents the direct application of minimization methods. To overcome this, we restrict the exponent $q$ of the nonlinear term to the range $q\in(2,8/3)$; this choice allows us to bypass the aforementioned obstacle. Second, the nonlocal terms in the system introduce a compactness issue: because they involve Sobolev-critical exponents, standard compactness techniques fail. To restore compactness, inspired by~\cite{Gao2024}, we employ the concentration-compactness principle to analyze the behavior of (PS)-sequences for the functional.

Building on the aforementioned works, we now present the main result of this article concerning problems \eqref{eq:1.1}-\eqref{eq:1.2}.

\begin{theorem}\label{thm:1.1}
For any fixed $k \in \mathbb N$, one can find a constant $b^* > 0$ such that whenever
$b \in (0, b^*)$, equation~\eqref{eq:1.1} posses at least $k$ distinct solutions
\[
  (u_j, \phi_j, \lambda_j) \in H_0^1(\Omega_r) \times (H_0^1(\Omega_r) \cap W_0^{1,4}(\Omega_r)) \times \mathbb R
\]
satisfying the normalization condition $\int_{\Omega_r} u_j^2 \d x = b^2$,
$j = 1$, $2$, $\ldots\,$,~$k$.
\end{theorem}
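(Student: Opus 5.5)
We work on the constraint manifold $S_b=\{u\in H_0^1(\Omega_r):\int_{\Omega_r}u^2\,\d x=b^2\}$. The plan is to reduce the system to a single functional, truncate it so that it becomes coercive and bounded below, produce many critical levels by genus theory, and restore compactness below the critical threshold through concentration-compactness.

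\emph{Reduction.} For each $u\in H_0^1(\Omega_r)$ the Poisson equation $-\Delta\phi-\varepsilon^4\Delta_4\phi=u^2$ has a unique solution
\[
\phi_u\in X:=H_0^1(\Omega_r)\cap W_0^{1,4}(\Omega_r).
\]
It is the minimizer of the strictly convex coercive functional
\[
\phi\mapsto \tfrac12\|\nabla\phi\|_2^2+\tfrac{\varepsilon^4}{4}\|\nabla\phi\|_4^4-\tfrac12\int u^2\phi,
\]
and it satisfies $\phi_u\ge0$. The map $u\mapsto\phi_u$ is $C^1$. Testing the equation with $\phi_u$ gives
\[
\|\nabla\phi_u\|_2^2\le C\|u\|^4 .
\]
The system then reduces to critical points on $S_b$ of
\[
I(u)=\tfrac12\|\nabla u\|_2^2+\tfrac14\int\phi_uu^2+\tfrac{\varepsilon^4}{8}\|\nabla\phi_u\|_4^4\cdots-\tfrac1q\|u\|_q^q-\tfrac16\|u\|_6^6 .
\]
To be safe, I will use the standard reduced functional $E(u)=\frac12\|\nabla u\|^2+\frac12\int\phi_uu^2-G(\phi_u)-\frac1q\|u\|_q^q-\frac16\|u\|_6^6$, where $G$ is the Poisson energy. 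Its derivative is $\int\nabla u\nabla v+\int\phi_uuv-\int|u|^{q-2}uv-\int|u|^4uv$. The Lagrange multiplier of a constrained critical point gives $\lambda$.

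\emph{Truncation.} By the Gagliardo--Nirenberg inequality on $S_b$, $\|u\|_q^q\le Cb^{q(1-\gamma)}\|\nabla u\|^{q\gamma}$ with $q\gamma=\frac{3(q-2)}{2}<2$, since $q<8/3$ (in fact $q<10/3$ suffices). Together with $\|u\|_6^6\le S^{-3}\|\nabla u\|^6$, the nonlocal term being nonnegative yields
\[
E(u)\ge h(\|\nabla u\|):=\tfrac12t^2-C_1b^{q(1-\gamma)}t^{q\gamma}-C_2t^6 .
\]
For $b$ small, $h$ is negative near $0$, has a positive local maximum, and has a zero $R_0<R_1$. Fix a cutoff $\tau\in C^\infty$ with $\tau=1$ on $[0,R_0]$ and $\tau=0$ on $[R_1,\infty)$, and put
\[
E_T(u)=\tfrac12\|\nabla u\|^2+(\text{nonlocal})-\tfrac1q\|u\|_q^q-\tfrac{\tau(\|\nabla u\|)}6\|u\|_6^6 .
\]
Then $E_T$ is coercive and bounded below on $S_b$. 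Moreover, $E_T(u)<0$ forces $\|\nabla u\|<R_0$, so there $E_T=E$ and $E_T'=E'$.

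\emph{Genus levels.} Take $k$ normalized Dirichlet eigenfunctions $e_1,\dots,e_k$ and the set $A_k=\{b\sum a_ie_i:\sum a_i^2=1\}\subset S_b$. This set has genus $k$, and on it $\|\nabla u\|^2\le\lambda_kb^2$. Since $\|u\|_q^q\ge c_kb^q$ by equivalence of norms, and the nonlocal term is $O(b^4)$ with $q<4$, we get $\sup_{A_k}E_T<0$ for $b<b^*(k)$, where $b^*$ also makes the truncation geometry work. Define
\[
c_j=\inf_{\gamma(A)\ge j}\sup_A E_T ,\qquad j=1,\dots,k .
\]
These satisfy $-\infty<c_1\le\dots\le c_k<0$. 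The standard genus argument on the symmetric manifold $S_b$ then gives $k$ distinct pairs of critical points, counting multiplicity when levels coincide, provided $(PS)_c$ holds for $c<0$.

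\emph{Main obstacle: $(PS)_c$ for $c<0$.} Take a (PS) sequence $(u_n)$. It is bounded because $\|\nabla u_n\|<R_0$. Pass to a subsequence with $u_n\rightharpoonup u$ in $H_0^1$, $u_n\to u$ in $L^2$ and $L^q$; in particular $u\in S_b$, since the domain $\Omega_r$ is bounded. Also $\phi_{u_n}\to\phi_u$ in $X$ by monotonicity of the operator. Only the critical term can fail to converge. Apply Lions' concentration-compactness:
\[
|\nabla u_n|^2\rightharpoonup\mu\ge|\nabla u|^2+\sum\mu_j\delta_{x_j},\qquad |u_n|^6\rightharpoonup|u|^6+\sum\nu_j\delta_{x_j},\qquad S\nu_j^{1/3}\le\mu_j .
\]
Testing $E'(u_n)-\lambda_nu_n$ with $\psi_\rho u_n$, where $\psi_\rho$ is a cutoff at $x_j$ and $\lambda_n$ is bounded, gives $\mu_j=\nu_j$. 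Hence $\nu_j=0$ or $\nu_j\ge S^{3/2}$. The second alternative would force $\|\nabla u_n\|^2\ge S^{3/2}>R_0^2$, which contradicts $\|\nabla u_n\|<R_0$ once $b$ is small enough that $R_0^2<S^{3/2}$. Therefore $u_n\to u$ in $L^6$. Subtracting the equations then yields strong $H_0^1$ convergence.

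Finally, since each critical point lies in the region where $E_T=E$, it solves the original problem~\eqref{eq:1.1}--\eqref{eq:1.2} with $\phi_j=\phi_{u_j}$ and multiplier $\lambda_j$.
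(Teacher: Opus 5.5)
Your overall scheme (reduction, truncation below the first zero $R_0$ of $h$, genus levels at negative energy, and concentration--compactness with $R_0^2<S^{3/2}$ giving $(PS)_c$ for $c<0$) is essentially the paper's. However, the step that is supposed to produce negative levels fails. On $A_k$ your bound reads
\[
E_T(u)\le \tfrac12\lambda_k(\Omega_r)\,b^2+C\lambda_k(\Omega_r)^2\,b^4-\tfrac{c_k}{q}\,b^q .
\]
You compared only the nonlocal term with $b^q$. The kinetic term is of order $b^2$, and it dominates $b^q$ as $b\to0$ because $q>2$, so the right-hand side is positive for small $b$. On a fixed domain the claim itself is false for small $b$. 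Indeed, $E_T(u)<0$ forces $\|\nabla u\|_2<R_0$. Comparing $\frac12t^2$ with the $b$-term in $h$ shows $R_0\le C\,b^{\frac{6-q}{10-3q}}$ with $\frac{6-q}{10-3q}>1$. On the other hand, Poincar\'e gives $\|\nabla u\|_2\ge\sqrt{\lambda_1(\Omega_r)}\,b$ for every $u\in S_b$. Hence $\{E_T<0\}\cap S_b=\emptyset$ for $b$ small, every minimax value $c_j$ is nonnegative, and your $(PS)_c$-for-$c<0$ machinery produces no critical points.

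The missing ingredient is the size of the domain, which is exactly what the paper exploits: it uses the dilation $u_l(x)=l^{3/2}u(lx)$ in the proof of Lemma~\ref{lem:3.1*}(iii) and then fixes $r=1/l_0$. The threshold $b^*$, the radii $R_0,R_1$ and the condition $R_0^2<S^{3/2}$ involve only the Gagliardo--Nirenberg and Sobolev constants of $\mathbb R^3$, so they do not depend on $r$. You can therefore fix $b<b^*$ first and then enlarge the domain. Under the mass-preserving map $u\mapsto r^{-3/2}u(x/r)$ from $H_0^1(\Omega)$ onto $H_0^1(\Omega_r)$:
\begin{itemize}
\item $\|\nabla u\|_2^2$ scales like $r^{-2}$;
\item the nonlocal part of the energy is at most $C\|u\|_{12/5}^4$ with $C$ independent of $r$, and $\|u\|_{12/5}^4$ scales like $r^{-1}$;
\item $\|u\|_q^q$ scales like $r^{3-\frac{3q}{2}}$.
\end{itemize}
Because $q<8/3$ means $0<\frac{3q}{2}-3<1$, the $L^q$ term wins as $r\to\infty$. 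So take the dilated copy of your $A_k$, built from eigenfunctions of $\Omega$; the dilation is odd and linear, so its genus is still $k$. It satisfies $\sup E_T<0$ on $\Omega_r$ once $r\ge r(k,b)$, and the rest of your argument then goes through. This is the real role of $q<8/3$; your remark that $q<10/3$ suffices shows it was missed. It also means the theorem has to be read with $\Omega_r$ large depending on $k$ and $b$, not on a fixed domain with $b\to0$.

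Minor slips:
\begin{itemize}
\item The functional minimized by $\phi_u$ should contain $-\int u^2\phi$, not $-\frac12\int u^2\phi$.
\item The localization step gives $\mu(\{x_j\})=\nu_j$ rather than $\mu_j=\nu_j$; this is still enough for your dichotomy.
\item $E\in C^1$ should be derived from continuity of $u\mapsto\phi_u$ together with $E(u)$ being the maximum over $\phi$ of a concave functional. Do not rely on a claimed $C^1$-dependence of $\phi_u$ on $u$ in $H_0^1\cap W_0^{1,4}$, which you do not justify.
\end{itemize}
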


\begin{theorem}\label{thm:1.2}
  For any $\varepsilon > 0$, denote $(u_{j,\varepsilon}, \phi_{j,\varepsilon}, \lambda_{j,\varepsilon}) \subset S_{r,b} \times (H_0^1(\Omega_r) \cap W_0^{1,4}(\Omega_r)) \times \mathbb R$, $j = 1$, $2$, $3$, $\ldots\,$,~$k$ are the solutions of~\eqref{eq:1.1} obtained from Theorem~\ref{thm:1.1}.
  Under the assumption of Theorem~\ref{thm:1.1}, when $\varepsilon \to 0$, there exists $(u_{j,0}, \phi_{j,0}, \lambda_{j,0}) \subset S_{r,b} \times H_0^1(\Omega_r) \times \mathbb R$, which ensures
  \[
    \lim_{\varepsilon\to0} u_{j,\varepsilon} = u_{j,0} \quad \text{in } H_0^1(\Omega_r), \quad
    \lim_{\varepsilon\to0} \phi_{j,\varepsilon}(u_{j,\varepsilon}) = \phi_{j,0}(u_{j,0}) \quad \text{in } H_0^1(\Omega_r), \quad j = 1,\ 2,\ \ldots\,,~k,
  \]
  where $(u_{j,0}, \phi_{j,0})$ are the solutions of the following Schr\"odinger-Poisson
  \begin{equation}
    \begin{cases}
      -\Delta u + \phi u = \lambda u + |u|^{q-2} u + |u|^4 u, & x \in \Omega_r,\\
      -\Delta \phi = u^2, & x \in  \Omega_r,\\
      u = \phi = 0, & x \in \partial\Omega_r,\\
      \int_{\Omega_r} u^2 \d x = b^2
    \end{cases}
  \end{equation}
  with $\lambda = \lambda_{j,0}$.
\end{theorem}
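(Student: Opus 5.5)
We first fix $j$ and a sequence $\varepsilon_n\to0$, and write $u_n=u_{j,\varepsilon_n}$, $\phi_n=\phi_{j,\varepsilon_n}(u_n)$, $\lambda_n=\lambda_{j,\varepsilon_n}$. The plan rests on bounds independent of $\varepsilon$. In the construction behind Theorem~\ref{thm:1.1}, the solutions come from genus minimax levels $c_{j,\varepsilon}$ of the truncated functional on $S_{r,b}$. These levels are negative and lie below the compactness threshold of order $\tfrac13 S^{3/2}$, for $b<b^*$.

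\textbf{Step 1: uniform bounds.} Testing the Poisson equation with $\phi$ gives
\[
\|\nabla\phi\|_2^2+\varepsilon^4\|\nabla\phi\|_4^4=\int u^2\phi .
\]
The right side is at most $C\|u\|_{12/5}^2\|\nabla\phi\|_2$. Hence $\|\nabla\phi_n\|_2\le C\|u_n\|^2$ with $C$ independent of $\varepsilon$. I will check that $b^*$, the truncation radius and the level bounds can be chosen uniformly in $\varepsilon\in(0,1]$. The nonlocal term is nonnegative and only helps coercivity, and the comparison sets used for the genus levels can be fixed independently of $\varepsilon$. Granting this, $\|u_n\|_{H_0^1}\le C$ and $\|\nabla\phi_n\|_2\le C$.

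To bound $\lambda_n$, multiply the $u$-equation by $u_n$:
\[
-\lambda_n b^2=\|\nabla u_n\|_2^2+\int\phi_nu_n^2-\|u_n\|_q^q-\|u_n\|_6^6 .
\]
So $\lambda_n$ is bounded, and we pass to subsequences with $u_n\rightharpoonup u_0$ in $H_0^1$, $\phi_n\rightharpoonup\phi_0$ in $H_0^1$, and $\lambda_n\to\lambda_0$. Since $\Omega_r$ is bounded, $u_n\to u_0$ in $L^p$ for $p<6$. Thus $\|u_0\|_2=b$, so $u_0\in S_{r,b}$.

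\textbf{Step 2: the limit equations.} For $\psi\in C_c^\infty(\Omega_r)$,
\[
\int\nabla\phi_n\nabla\psi+\varepsilon_n^4\int|\nabla\phi_n|^2\nabla\phi_n\nabla\psi=\int u_n^2\psi .
\]
The quasilinear term is at most $\varepsilon_n^4\|\nabla\phi_n\|_4^3\|\nabla\psi\|_4$. Since $\varepsilon_n^4\|\nabla\phi_n\|_4^4\le C$, we get $\|\nabla\phi_n\|_4^3\le C\varepsilon_n^{-3}$, so this term is $O(\varepsilon_n)\to0$. Hence $-\Delta\phi_0=u_0^2$. Strong convergence of $\phi_n$ in $H_0^1$ follows by testing with $\phi_n-\phi_0$. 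That gives
\[
\|\nabla(\phi_n-\phi_0)\|_2^2\le\int(u_n^2-u_0^2)(\phi_n-\phi_0)-\varepsilon_n^4\int|\nabla\phi_n|^2\nabla\phi_n\nabla(\phi_n-\phi_0),
\]
using $\varepsilon_n^4\int|\nabla\phi_n|^2\nabla\phi_n\nabla\phi_0\to0$ for $\phi_0\in W_0^{1,4}$. If $\phi_0\notin W_0^{1,4}$, I would replace it by a smooth approximation. The remaining terms are nonpositive or vanish, so $\phi_n\to\phi_0$ in $H_0^1$. Passing to the limit in the $u$-equation, using $|u_n|^4u_n\rightharpoonup|u_0|^4u_0$ in $L^{6/5}$, shows that $(u_0,\phi_0,\lambda_0)$ solves the limit system.

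\textbf{Step 3: strong convergence of $u_n$ (main obstacle).} The difficulty is the critical term, which may concentrate. I apply Lions' concentration-compactness principle to $|\nabla u_n|^2\rightharpoonup\mu$ and $|u_n|^6\rightharpoonup\nu$, which gives
\[
\nu=|u_0|^6+\sum\nu_i\delta_{x_i},\qquad \mu\ge|\nabla u_0|^2+\sum\mu_i\delta_{x_i},\qquad S\nu_i^{1/3}\le\mu_i .
\]
Testing the equation with $\psi_\rho u_n$, where $\psi_\rho$ is a cutoff at $x_i$, gives $\mu_i\le\nu_i$. So either $\nu_i=0$ or $\nu_i\ge S^{3/2}$. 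To exclude the second case, use the energy identity
\[
c_{j,\varepsilon_n}=I(u_n)-\tfrac12\langle I'(u_n),u_n\rangle+\tfrac{\lambda_n}{2}(\ldots) .
\]
This expresses the level through $\tfrac13\|u_n\|_6^6$ plus lower-order terms of order $b^{q}$ and $b^2$. It yields $c\ge \tfrac13 S^{3/2}-Cb^{\gamma}$ for some $\gamma>0$, which contradicts $c_{j,\varepsilon}<0$ when $b<b^*$. Hence $u_n\to u_0$ in $L^6$.

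Finally, subtract the equations for $u_n$ and $u_0$ and test with $u_n-u_0$. This gives $\|\nabla(u_n-u_0)\|_2\to0$. The nonlocal term is handled by $\phi_n\to\phi_0$ in $H_0^1$ together with $u_n\to u_0$ in $L^{12/5}$, and the $\lambda$ term by boundedness together with $L^2$ convergence. Distinctness of the $k$ limits is not claimed in the statement, so no further argument is needed.
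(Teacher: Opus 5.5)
Steps 1 and 2 and your closing $H_0^1$ argument match the paper's proof. Your explicit proof that $\phi_n\to\phi_0$ in $H_0^1$, by testing with $\phi_n-\phi_0$, is fine. No approximation is needed, since $u_0^2\in L^3$ gives $\phi_0\in W_0^{1,4}$ by elliptic regularity.

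The gap is in Step 3, where you exclude concentration through the energy level. With your convention $\langle E'(u_n),u_n\rangle=-\lambda_nb^2$, the identity reads
\[
c=\tfrac13\|u_n\|_6^6+\bigl(\tfrac12-\tfrac1q\bigr)\|u_n\|_q^q-\tfrac14\|\nabla\phi_n\|_2^2-\tfrac{\varepsilon_n^4}{8}\|\nabla\phi_n\|_4^4-\tfrac12\lambda_nb^2 .
\]
The multiplier term here is not a lower-order term of size $b^2$. Your only control on $\lambda_n$ is $\lambda_nb^2=\|u_n\|_q^q+\|u_n\|_6^6-\|\nabla u_n\|_2^2-\int\phi_nu_n^2$, and that expression contains $\|u_n\|_6^6$ itself.

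In fact, $c<0$ forces $\lambda_nb^2>\tfrac23\|u_n\|_6^6+(1-\tfrac2q)\|u_n\|_q^q-\tfrac12\int\phi_nu_n^2$. So if a bubble with $\nu_i\ge S^{3/2}$ were present, $-\tfrac12\lambda_nb^2$ would absorb the $\tfrac13\|u_n\|_6^6$ term, and the identity would be perfectly consistent with $c<0$. Substituting the expression for $\lambda_n$ just returns $c=E(u_n)$. On a bounded domain you also have no boundary-free Pohozaev identity to eliminate $\lambda_n$. The bound $c\ge\tfrac13S^{3/2}-Cb^\gamma$ therefore does not follow.

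The missing ingredient is an a priori bound that the construction already gives you:
\begin{itemize}
\item Since $E^T(u_{j,\varepsilon})=d_{j,\varepsilon}<0$, Lemma~\ref{lem:3.1*}(ii) yields $\|\nabla u_{j,\varepsilon}\|_2\le R_1$.
\item $R_1$ comes from $\mathcal G$ and does not depend on $\varepsilon$; this is also what justifies the uniformity you grant in Step 1.
\item $R_1$ was chosen in \eqref{eq:3.1} with $R_1^2<S^{3/2}$.
\end{itemize}
Your inequalities $\mu_i\le\nu_i$ and $S\nu_i^{1/3}\le\mu_i$ give $\mu_i\ge S^{3/2}$. On the other hand, $\mu_i\le\limsup_n\|\nabla u_n\|_2^2\le R_1^2<S^{3/2}$, a contradiction.

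This is exactly how the paper rules out atoms, in Step 1 of Lemma~\ref{lem:3.1*}, invoked again in Section 4. It needs no energy comparison and no information on $\lambda_n$. If you prefer an energy argument, you would have to split $u_n=u_0+v_n$ via Brezis--Lieb, so that the multiplier terms cancel between the equations for $u_n$ and $u_0$, rather than work with $u_n$ alone. With this replacement the rest of your proof goes through.
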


\section{Preliminaries}


Throughout this work, we fix a bounded smooth domain $\Omega \subset \mathbb R^3$.
For any scaling parameter $r > 0$, we set $\Omega_r := \{r x \in \mathbb R^3: x \in\Omega\}$.
The standard Sobolev spaces are denoted by $H_0^1(\Omega_r)$ and $W_0^{1,4}(\Omega_r)$.
In particular, the inner product and norm on $H_0^1(\Omega_r)$ are given by
\[
(u,v) := \int_{\Omega_r} \nabla u \cdot \nabla v \d x, \qquad
\|u\| := \bigg( \int_{\Omega_r} |\nabla u|^2 \d x \bigg)^{1/2}.
\]

The $L^2(\Omega_r)$ inner product is
\[
(u,v)_2 := \int_{\Omega_r} u v \d x.
\]
For any exponent $1 \leq p < 6$, we define the $L^p(\Omega_r)$ norm as
\[
\|u\|_p := \bigg( \int_{\Omega_r} |u|^p \d x \bigg)^{1/p},
\]
while the norm in $L^\infty(\Omega_r)$ is denoted by $|u|_\infty$.

We introduce the function space
\[
\mathbb{H}:=H_0^1(\Omega_r)\cap W_0^{1,4}(\Omega_r),
\]
equipped with the norm
\[
	\|\phi\|_{\mathbb{H}}
	:= \|\nabla \phi\|_2 + \|\nabla \phi\|_4 = \biggl(\int_{\Omega_r} |\nabla \phi|^2 \d x\biggr)^{\frac12} + \biggl(\int_{\Omega_r} |\nabla \phi|^4 \d x\biggr)^{\frac14}.
\]

Our analysis begin with the quasilinear Poisson equation~\eqref{eq:1.1}.
According to the results established in~\cite{Figueiredo2019}, for any $u$, the mapping
\[
	u^2 : \phi \in \mathbb H \longmapsto \int_{\Omega_r} \phi u^2 \d x \in \mathbb R
\]
is both linear and continuous, hence $u^2 \in \mathbb H$.
Consequently, the boundary value problem
\begin{equation}\label{eq:2.1}
	\begin{cases}
		-\Delta \phi - \varepsilon^4 \Delta_4 \phi = u^2, & x \in \Omega_r,\\
		\phi = 0, & x \in \partial\Omega_r
	\end{cases}
\end{equation}
possesses a unique solution $\phi_\varepsilon$. This solution is, in fact, the unique critical
point of the functional
\begin{equation}\label{eq:2.2}
	\phi_\varepsilon \in \mathbb H \longmapsto \frac12 \int_{\Omega_r} |\nabla \phi_\varepsilon|^2 \d x
	+ \frac{\varepsilon^4}{4} \int_{\Omega_r} |\nabla \phi_\varepsilon|^4 \d x
	- \int_{\Omega_r} \phi_\varepsilon u^2 \d x.
\end{equation}
That is, for any $\varphi \in \mathbb H$ we have
\begin{equation}\label{eq:2.3*}
	\int_{\Omega_r} (\nabla \phi_\varepsilon(u) \nabla \varphi + \varepsilon^4 |\nabla \phi_\varepsilon(u)|^2 \nabla \phi_\varepsilon(u)\nabla \varphi) \d x
= \int_{\Omega_r} u^2 \varphi \d x.
\end{equation}
For any $u \in H_0^1(\Omega_r)$,
using~\eqref{eq:2.3*}, H\"older's inequality and the Sobolev inequality yield
\begin{equation}
	\|\nabla\phi_\varepsilon(u)\|_2^2 + \varepsilon^4 \|\nabla \phi_\varepsilon(u)\|_4^4
= \int_{\Omega_r} \phi_\varepsilon(u) u^2 \d x \leq
\|\phi_\varepsilon(u)\|_6 \|u^2\|_{\frac65} \leq C \|\nabla \phi_\varepsilon(u)\|_2 \|u\|_{\frac{12}5}^2,
\end{equation}
implying
\begin{equation}
	\|\nabla \phi_\varepsilon(u)\|_2 \leq C\|u\|_{\frac{12}{5}}^2.
\end{equation}
Hence we obtain
\begin{equation}\label{eq:2.5*}
	\|\nabla \phi_\varepsilon(u)\|_2^2 + \varepsilon^4\|\nabla \phi_\varepsilon(u)\|_4^4 \leq C\|u\|_{\frac{12}{5}}^4 \leq C\|u\|^4.
\end{equation}

Continuity of $\phi_\varepsilon$ follows from the next lemma, whose proof is identical to that in~\cite{Figueiredo2019}.

\begin{lemma}
	Let $g_n \to g$ in $\mathbb H'$. Then
	\[
		\int_{\Omega_r} |\nabla \phi_\varepsilon(g_n)| \d x \to
		\int_{\Omega_r} |\nabla \phi_\varepsilon(g)|^2 \d x,\quad
		\int_{\Omega_r} |\nabla \phi_\varepsilon(g_n)|^4 \d x \to
		\int_{\Omega_r} |\nabla \phi_\varepsilon(g)|^4 \d x.
	\]
	In particular, the operator $\phi_\varepsilon$ is continuous and
	\[
		\phi_\varepsilon(g_n) \to \phi_\varepsilon(g) \quad
		\text{in } L^\infty(\Omega_r).
	\]
\end{lemma}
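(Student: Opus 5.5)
The plan is to use the monotone structure of the operator $A_\varepsilon(\phi) := -\Delta\phi - \varepsilon^4\Delta_4\phi$ from $\mathbb H$ into $\mathbb H'$. Write $\phi_n := \phi_\varepsilon(g_n)$ and $\phi := \phi_\varepsilon(g)$. By \eqref{eq:2.3*} (with $u^2$ replaced by a general datum $g \in \mathbb H'$), $\phi_n$ satisfies $\langle A_\varepsilon(\phi_n), \varphi\rangle = \langle g_n, \varphi\rangle$ for every $\varphi \in \mathbb H$.

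\textbf{Step 1: boundedness.} Testing with $\varphi = \phi_n$ gives
\[
\|\nabla\phi_n\|_2^2 + \varepsilon^4\|\nabla\phi_n\|_4^4 \le \|g_n\|_{\mathbb H'}\|\phi_n\|_{\mathbb H}.
\]
Since $\|g_n\|_{\mathbb H'}$ is bounded, the sequence $(\phi_n)$ is bounded in $\mathbb H$; the quartic and quadratic terms dominate the linear right-hand side. As $\mathbb H$ is reflexive, we may pass to a subsequence with $\phi_n \rightharpoonup \psi$ weakly in $\mathbb H$.

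\textbf{Step 2: strong convergence.} Subtract the equations for $\phi_n$ and $\phi$, and test with $\phi_n - \phi$:
\[
\int_{\Omega_r} |\nabla(\phi_n-\phi)|^2 \d x + \varepsilon^4\int_{\Omega_r} \bigl(|\nabla\phi_n|^2\nabla\phi_n - |\nabla\phi|^2\nabla\phi\bigr)\cdot\nabla(\phi_n-\phi) \d x = \langle g_n - g, \phi_n - \phi\rangle.
\]
The right-hand side is at most $\|g_n - g\|_{\mathbb H'}\,\|\phi_n - \phi\|_{\mathbb H} \to 0$, because of Step 1. On the left, use the standard vector inequality
\[
(|a|^2a - |b|^2b)\cdot(a-b) \ge c\,|a-b|^4 .
\]
Both terms are then nonnegative, so $\nabla\phi_n \to \nabla\phi$ strongly in $L^2$ and in $L^4$. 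This identifies $\psi = \phi$ and yields convergence of the full sequence, not just a subsequence. The convergence of the $L^2$ and $L^4$ norms of the gradients stated in the lemma follows immediately; I read the first display as the $L^2$ norm squared.

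\textbf{Step 3: $L^\infty$ convergence.} The point is that $\Omega_r \subset \mathbb R^3$ is bounded and $4 > 3$, so Morrey's embedding $W_0^{1,4}(\Omega_r) \hookrightarrow C^{0,1/4}(\overline{\Omega_r}) \subset L^\infty(\Omega_r)$ holds. It gives
\[
|\phi_n - \phi|_\infty \le C\|\nabla(\phi_n - \phi)\|_4 \to 0.
\]

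The main subtlety is not analytic but in the setup. One must check that $\phi_\varepsilon(g)$ is well defined for every $g \in \mathbb H'$, which follows from Minty--Browder, since $A_\varepsilon$ is strictly monotone, coercive, and hemicontinuous. One must also keep track of the constant in the monotonicity inequality for the 4-Laplacian part. Beyond that, the argument is a routine monotone-operator argument.
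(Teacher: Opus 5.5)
Your argument is correct and complete, and your reading of the first display (with the missing square restored) is the intended one. The paper gives no argument of its own. It only declares the proof identical to that of Figueiredo--Siciliano, which is written for a planar domain, where $W_0^{1,4}\hookrightarrow L^\infty$ is automatic.

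The phrasing of the statement suggests the route taken there: convergence of the two gradient norms comes first, and continuity of $\phi_\varepsilon$ only ``in particular''. That route would bound $\phi_\varepsilon(g_n)$ in $\mathbb H$, extract a weak limit, and identify it with $\phi_\varepsilon(g)$ (via Minty's trick or the minimizing property of \eqref{eq:2.2}). It would then pass to the limit in $\|\nabla\phi_n\|_2^2+\varepsilon^4\|\nabla\phi_n\|_4^4=\langle g_n,\phi_n\rangle$ to get convergence of each norm by lower semicontinuity, and upgrade to strong convergence by uniform convexity of $L^2$ and $L^4$.

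Your strong-monotonicity inequality $(|a|^2a-|b|^2b)\cdot(a-b)\ge\frac14|a-b|^4$ collapses all of this into the single estimate $\|\nabla(\phi_n-\phi)\|_2^2+\frac{\varepsilon^4}{4}\|\nabla(\phi_n-\phi)\|_4^4\le\|g_n-g\|_{\mathbb H'}\,\|\phi_n-\phi\|_{\mathbb H}$. This gives strong convergence of the whole sequence with an explicit modulus, and needs no identification-of-the-limit or Radon--Riesz step. In particular, the weak extraction in your Step 1 is superfluous; only the bound is used.

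For this paper specifically, your write-up checks the two points that the word ``identical'' glosses over. First, in $\mathbb R^3$ the $L^\infty$ convergence survives because $4>3$, via Morrey's embedding $W_0^{1,4}(\Omega_r)\hookrightarrow C^{0,1/4}(\overline{\Omega_r})$. Second, $\phi_\varepsilon(g)$ is well defined for an arbitrary $g\in\mathbb H'$, whereas the paper only asserts solvability for data of the form $u^2$. You get this from Minty--Browder; alternatively, minimize the strictly convex coercive functional \eqref{eq:2.2} with $u^2$ replaced by $g$.
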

Throughout the rest of the paper, $\phi_\varepsilon = \phi_\varepsilon(u)$ will denote the unique solution of~\eqref{eq:2.1} for a given $u$, which satisfies
\[
	\int_{\Omega_r} |\nabla \phi_\varepsilon(u)|^2 \d x + \varepsilon^4 \int_{\Omega_r} |\nabla \phi_\varepsilon(u)|^4 \d x = \int_{\Omega_r} \phi_\varepsilon(u) u^2\d x.
\]
The following lemma summarize further properties of $\phi_\varepsilon(u)$, proved in~\cite{Figueiredo2019}.

\begin{lemma}\label{lem:2.2*}
	Let $q \in [1, +\infty)$.
	If $\{u_n\}$ converges to $\tilde\omega$ in $L^q(\Omega_r)$, then
  \begin{enumerate}[label = \textup{(\alph*)}, leftmargin = 2em, nosep]
    \item $\displaystyle\lim_{n\to\infty} \int_{\Omega_r} \phi_\varepsilon(u_n) u_n^2 \d x
	= \int_{\Omega_r} \phi_\varepsilon(\tilde\omega) \tilde\omega^2 \d x$,
    \item $\phi_\varepsilon(u_n) \to \phi_\varepsilon(\tilde \omega)$ in
    $L^\infty(\Omega_r)$,
    \item $\displaystyle \lim_{n\to\infty} \int_{\Omega_r} \phi_\varepsilon(u_n) u_n v\d x
	= \int_{\Omega_r} \phi_\varepsilon(\tilde\omega) \tilde\omega v \d x$, for all $v \in H_0^1(\Omega_r)$.
  \end{enumerate}
\end{lemma}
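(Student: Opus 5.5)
The plan is to reduce all three assertions to the continuity of the solution map $g \mapsto \phi_\varepsilon(g)$ from $\mathbb H'$ into $\mathbb H$ and into $L^\infty(\Omega_r)$, given by the preceding continuity lemma. The first step is to check that $u_n \to \tilde\omega$ in $L^q(\Omega_r)$ implies $u_n^2 \to \tilde\omega^2$ in $\mathbb H'$. For $\varphi \in \mathbb H$ we have
\[
\Bigl|\int_{\Omega_r} (u_n^2 - \tilde\omega^2)\varphi \d x\Bigr| \le |\varphi|_\infty \,\|u_n - \tilde\omega\|_2\, \|u_n + \tilde\omega\|_2 .
\]
Since $\mathbb H \hookrightarrow W_0^{1,4}(\Omega_r) \hookrightarrow L^\infty(\Omega_r)$ by Morrey's embedding ($4 > 3$), $|\varphi|_\infty \le C\|\varphi\|_{\mathbb H}$. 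Hence $\|u_n^2 - \tilde\omega^2\|_{\mathbb H'} \le C\|u_n - \tilde\omega\|_2\|u_n+\tilde\omega\|_2$. When $q \ge 2$ this tends to zero by Hölder's inequality on the bounded domain $\Omega_r$. For $q<2$ I would use the bound $\|u_n^2-\tilde\omega^2\|_1 \to 0$ instead. That bound needs an $L^2$ hypothesis, so I would read the statement as being applied to sequences that are also bounded in $H_0^1(\Omega_r)$, as they are in all later uses. Then $L^q$ convergence together with boundedness in $L^6$ gives $L^2$ convergence by interpolation.

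Once $u_n^2 \to \tilde\omega^2$ in $\mathbb H'$, the continuity lemma gives (b) directly: $\phi_\varepsilon(u_n) \to \phi_\varepsilon(\tilde\omega)$ in $L^\infty(\Omega_r)$. For (a), I would split
\[
\int_{\Omega_r} \phi_\varepsilon(u_n)u_n^2 - \phi_\varepsilon(\tilde\omega)\tilde\omega^2 \d x = \int_{\Omega_r} (\phi_\varepsilon(u_n) - \phi_\varepsilon(\tilde\omega))u_n^2 \d x + \int_{\Omega_r} \phi_\varepsilon(\tilde\omega)(u_n^2 - \tilde\omega^2)\d x .
\]
The first term is at most $|\phi_\varepsilon(u_n)-\phi_\varepsilon(\tilde\omega)|_\infty \|u_n\|_2^2 \to 0$. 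The second is at most $|\phi_\varepsilon(\tilde\omega)|_\infty \|u_n^2-\tilde\omega^2\|_1 \to 0$.

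For (c), take $v \in H_0^1(\Omega_r) \subset L^2(\Omega_r)$ and split in the same way:
\[
\Bigl|\int_{\Omega_r} \phi_\varepsilon(u_n)u_n v - \phi_\varepsilon(\tilde\omega)\tilde\omega v \d x\Bigr| \le |\phi_\varepsilon(u_n)-\phi_\varepsilon(\tilde\omega)|_\infty \|u_n\|_2\|v\|_2 + |\phi_\varepsilon(\tilde\omega)|_\infty \|u_n-\tilde\omega\|_2\|v\|_2 \to 0 .
\]

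The main obstacle is the continuity lemma itself, which the paper quotes from~\cite{Figueiredo2019}. Its proof would test \eqref{eq:2.3*} for $g_n$ and for $g$ with $\varphi = \phi_\varepsilon(g_n) - \phi_\varepsilon(g)$ and subtract. Strong monotonicity of $\phi \mapsto -\Delta\phi - \varepsilon^4\Delta_4\phi$, via the inequality $(|a|^2a-|b|^2b)\cdot(a-b) \ge c|a-b|^4$, then gives
\[
\|\nabla(\phi_n-\phi)\|_2^2 + c\varepsilon^4\|\nabla(\phi_n-\phi)\|_4^4 \le \|g_n-g\|_{\mathbb H'}\|\phi_n-\phi\|_{\mathbb H}.
\]
This forces $\phi_n \to \phi$ in $\mathbb H$, and hence in $L^\infty$ by Morrey. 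I would cite this rather than reprove it.
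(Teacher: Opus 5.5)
Your argument is correct and follows the route of the source the paper cites (the paper itself gives no proof and only refers to \cite{Figueiredo2019}). That route is: $\mathbb H\hookrightarrow W_0^{1,4}(\Omega_r)\hookrightarrow L^\infty(\Omega_r)$, so $L^2$-convergence gives $u_n^2\to\tilde\omega^2$ in $L^1\subset\mathbb H'$, and the continuity lemma for $\phi_\varepsilon$ plus H\"older then yields (a)--(c). Your caveat for $q\in[1,2)$ is justified rather than a gap, and restricting to $H_0^1$-bounded sequences, as in every later use of the lemma, is the right repair: $L^2$-normalized bumps concentrating at a point $x_0$ tend to $0$ in $L^q$ for $q<2$ while $u_n^2\to\delta_{x_0}$ in $\mathbb H'$, so (a) and (b) fail as literally stated.
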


\begin{lemma}[\cite{MR4812883} Gagliardo-Nirenberg inequality]\label{lem:2.3}
For any $N\geqslant 2$ and $s\in(2,2^{*})$, there exists a constant $C_{N,s}>0$ depending on $N$ and $s$ such that
\[
\int_{\mathbb{R}^{N}}|u|^s\d x\leqslant C_{N,s}\bigg(\int_{\mathbb{R}^{N}}|u|^2\d x\bigg)^{\frac{2s-N(s-2)}{4}}\bigg(\int_{\mathbb{R}^{N}}|\nabla u|^2\d x\bigg)^{\frac{N(s-2)}{4}},\quad \forall u \in H^1(\mathbb{R}^{N}).
\]
\end{lemma}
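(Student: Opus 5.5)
For $N\geq 3$ I would prove Lemma~\ref{lem:2.3} by interpolating between $L^2$ and $L^{2^*}$ with H\"older's inequality and then applying the Sobolev inequality. The case $N=2$, where $2^*=\infty$, needs a separate argument: I would start from the inhomogeneous embedding $H^1(\mathbb R^2)\hookrightarrow L^s(\mathbb R^2)$ and turn it into the multiplicative form by a scaling optimization. Throughout write $A:=\int_{\mathbb R^N}|u|^2\d x$ and $B:=\int_{\mathbb R^N}|\nabla u|^2\d x$. By density of $C_c^\infty(\mathbb R^N)$ in $H^1(\mathbb R^N)$ and continuity of both sides in the $H^1$ norm (for $s<2^*$), it suffices to treat $u\in C_c^\infty$ with $u\not\equiv 0$, so that $A,B>0$.

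\emph{Case $N\geq3$.} Choose $\theta\in(0,1)$ with $\frac1s=\frac{\theta}{2}+\frac{1-\theta}{2^*}$. Since $\frac1{2^*}=\frac12-\frac1N$, this gives $1-\theta=\frac{N(s-2)}{2s}$. H\"older's inequality yields $\|u\|_s\leq\|u\|_2^{\theta}\|u\|_{2^*}^{1-\theta}$, and the Sobolev inequality gives $\|u\|_{2^*}\leq S_N^{-1/2}\|\nabla u\|_2$. Raising to the power $s$, we get
\[
\int_{\mathbb R^N}|u|^s\d x\leq C\,A^{\frac{s\theta}{2}}B^{\frac{s(1-\theta)}{2}} .
\]
Here $\frac{s(1-\theta)}2=\frac{N(s-2)}4$ and $\frac{s\theta}{2}=\frac{s}{2}-\frac{N(s-2)}{4}=\frac{2s-N(s-2)}{4}$, which are exactly the exponents in Lemma~\ref{lem:2.3}.

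\emph{Case $N=2$ (and in fact any $N$).} Start from the continuous embedding $H^1(\mathbb R^N)\hookrightarrow L^s(\mathbb R^N)$ for $s\in(2,2^*)$, which gives
\[
\int_{\mathbb R^N}|u|^s\d x\leq C_0(A+B)^{s/2}.
\]
For $N=2$ this embedding is the one nontrivial input. It can be obtained from the $W^{1,1}$ Sobolev inequality $\|v\|_{2}\leq C\|\nabla v\|_1$ together with a truncation argument, or simply quoted. Now apply the embedding to $u_\mu(x):=u(\mu x)$. Then
\[
\int|u_\mu|^s\d x=\mu^{-N}\int|u|^s\d x,\qquad \int|u_\mu|^2\d x=\mu^{-N}A,\qquad \int|\nabla u_\mu|^2\d x=\mu^{2-N}B,
\]
so
\[
\int|u|^s\d x\leq C_0\,\mu^{N}\bigl(\mu^{-N}A+\mu^{2-N}B\bigr)^{s/2}.
\]
Choosing $\mu^2=A/B$ balances the two terms. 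Then $\mu^{-N}A+\mu^{2-N}B=2\mu^{-N}A$, so the right-hand side becomes
\[
C_0\,2^{s/2}\,\mu^{N(1-s/2)}A^{s/2}=C_0\,2^{s/2}\,A^{\frac{N(2-s)}{4}+\frac{s}{2}}\,B^{\frac{N(s-2)}{4}}=C_0\,2^{s/2}\,A^{\frac{2s-N(s-2)}{4}}\,B^{\frac{N(s-2)}{4}},
\]
which is the claimed inequality. For $N=2$ the exponents reduce to $A^{1}B^{(s-2)/2}$.

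The exponent bookkeeping in both cases is routine. The only genuine obstacle is the inhomogeneous embedding $H^1(\mathbb R^2)\hookrightarrow L^s$ in the borderline dimension $N=2$, where no endpoint Sobolev inequality is available. I would handle it by quoting the standard embedding, or by the $W^{1,1}$ route: apply $\|v\|_2\leq C\|\nabla v\|_1$ to $v=|u|^{s/2}$ on the set $\{|u|\geq1\}$, and use $|u|^s\leq|u|^2$ on $\{|u|<1\}$.
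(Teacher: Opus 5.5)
The paper gives no proof of this lemma: it is quoted from \cite{MR4812883} and only needed for $N=3$, $s=q\in(2,8/3)$, applied to functions of $H_0^1(\Omega_r)$ extended by zero. So there is nothing in the paper to compare against. Your proposal supplies the standard derivation, and it is correct.

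Your case $N\geq3$ (H\"older between $L^2$ and $L^{2^*}$, then Sobolev) is all the paper actually uses. It gives $C_{N,s}=S_N^{-N(s-2)/4}$, a constant independent of the domain. That independence is what justifies applying the inequality on $\Omega_r$ uniformly in $r$ in Section~3.

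Your dilation step is also right, and it in fact covers every $N\geq2$ at once. It shows the exponents are forced by scaling as soon as an inhomogeneous bound $\int|u|^s\d x\leq C_0(A+B)^{s/2}$ is available. The density reduction is unnecessary: H\"older, Sobolev and the dilation argument apply directly to any $u\in H^1(\mathbb R^N)\setminus\{0\}$, for which $B>0$ automatically.

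The only loose spot is your parenthetical sketch of the embedding $H^1(\mathbb R^2)\hookrightarrow L^s(\mathbb R^2)$. There are two problems with it:
\begin{enumerate}
\item The function $|u|^{s/2}\mathbf{1}_{\{|u|\geq1\}}$ jumps across $\{|u|=1\}$, so it is not in $W^{1,1}$. You would need something like $(|u|^{s/2}-1)_+$ instead.
\item After Cauchy--Schwarz you are left with $\int_{\{|u|\geq1\}}|u|^{s-2}\d x$. This is controlled by $\int|u|^2\d x$ only when $s\leq4$, so larger $s$ would need a bootstrap in steps of $2$.
\end{enumerate}

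A cleaner way is to apply $\|v\|_2\leq C\|\nabla v\|_1$ to $v=|u|^{s/2}$ with no truncation. This gives $\int|u|^s\d x\leq C\int|u|^{s-2}\d x\int|\nabla u|^2\d x$.
\begin{enumerate}
\item At $s=4$ this is Ladyzhenskaya's inequality.
\item H\"older between $L^2$ and $L^4$ then handles $s\in(2,4)$.
\item The recursion handles $s>4$.
\end{enumerate}
All of this lands directly in the form $A\,B^{(s-2)/2}$, with no scaling step needed.

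Since this embedding is standard and may simply be quoted, as you say, the issue does not affect the correctness of your proof.
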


In order to prove Theorem \ref{thm:1.1}, we employ the following concentration-compactness principle from \cite{Peng2024}.

\begin{lemma}\label{lem:3.1}
\[
|\nabla u_n|^2\rightharpoonup \mu \geqslant|\nabla u|_2^2+\sum_{j\in\mathcal{I}}\mu_j \delta_{x_j},\quad|u_n|^6\rightharpoonup\nu=|u|^6+\sum_{j\in\mathcal{I}}\nu_j\delta_{x_j},\quad\sum_{j\in\mathcal{I}}\nu_j^{\frac{1}{3}}<+\infty,
\]
where $\mu$, $\nu$, $\mu_j$ and $\nu_j$ are positive measures, $\mathcal{I}$ is an at most countable index set, $\{x_j\}\subset\mathbb{R}^3$ and $\delta_{x_j}$ is the Dirac mass at $x_j$. Moreover,
\[
S\nu_j^{\frac{1}{3}}\leqslant\mu_j ,\quad\forall j\in\mathcal{I},
\]
where $S$ is the optimal Sobolev constant defined by
\[
S=\inf\Biggl\{\int_{\mathbb{R}^3}|u|^2\d x:\ \int_{\mathbb{R}^3}|u|^{2^{*}}\d x=1\Biggr\}.
\]
\end{lemma}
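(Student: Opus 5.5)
We read Lemma~\ref{lem:3.1} in the setting where it will be used: $\{u_n\}\subset H_0^1(\Omega_r)$ (extended by zero to $\mathbb R^3$) is bounded, $u_n\rightharpoonup u$ in $H_0^1(\Omega_r)$, and $|\nabla u_n|^2\rightharpoonup\mu$, $|u_n|^6\rightharpoonup\nu$ weakly-$*$ in the sense of finite measures. These limits exist along a subsequence by boundedness in $H^1_0$ and $L^6$. We take $S$ to be the Sobolev constant $\inf\{\int|\nabla u|^2:\int|u|^6=1\}$; the gradient must appear in the definition, and the version in the statement is a typo. The proof follows Lions' second concentration-compactness lemma. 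The plan is to pass to the defect $v_n:=u_n-u$, prove a reverse H\"older inequality between the defect measures, and conclude that the defect of $\nu$ is purely atomic.

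\emph{Step 1 (reduction to $u=0$).} Since $u_n\to u$ a.e. after extraction, the Brezis--Lieb lemma gives $|u_n|^6-|v_n|^6-|u|^6\to0$ in $L^1$. Hence $|v_n|^6\rightharpoonup\bar\nu:=\nu-|u|^6$. For the gradients we expand
\[
|\nabla u_n|^2=|\nabla v_n|^2+2\nabla v_n\cdot\nabla u+|\nabla u|^2 .
\]
Tested against $\varphi\in C_c(\mathbb R^3)$, the cross term tends to $0$ because $\nabla v_n\rightharpoonup0$ in $L^2$. Thus $|\nabla v_n|^2\rightharpoonup\bar\mu:=\mu-|\nabla u|^2\ge0$, and in particular $\mu\ge|\nabla u|^2$.

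\emph{Step 2 (reverse H\"older inequality).} Fix $\varphi\in C_c^\infty(\mathbb R^3)$ and apply the Sobolev inequality to $\varphi v_n$:
\[
S\Bigl(\int|\varphi|^6|v_n|^6\Bigr)^{1/3}\le\int|\nabla(\varphi v_n)|^2 .
\]
We expand $\nabla(\varphi v_n)=\varphi\nabla v_n+v_n\nabla\varphi$. By Rellich compactness on the bounded set $\operatorname{supp}\varphi\cap\Omega_r$, we have $v_n\to0$ in $L^2_{loc}$, so the terms involving $v_n\nabla\varphi$ vanish in the limit (the cross term is handled by Cauchy--Schwarz). Passing to the limit yields
\[
S\Bigl(\int|\varphi|^6\,\d\bar\nu\Bigr)^{1/3}\le\int|\varphi|^2\,\d\bar\mu\qquad\forall\varphi\in C_c^\infty(\mathbb R^3).
\]

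\emph{Step 3 (atomicity of $\bar\nu$).} This is the main obstacle. By density the inequality extends to indicator functions, giving $S\,\bar\nu(E)^{1/3}\le\bar\mu(E)$ for all Borel sets $E$. Hence $\bar\nu\ll\bar\mu$, and by Radon--Nikodym $\bar\nu=h\,\bar\mu$. Let $\mathcal I:=\{x:\bar\mu(\{x\})>0\}$, which is at most countable because $\bar\mu$ is finite. At any point $x$ outside $\mathcal I$, shrinking balls $B_\rho(x)$ give
\[
\frac{\bar\nu(B_\rho)}{\bar\mu(B_\rho)}\le S^{-3}\bar\mu(B_\rho)^2\to0,
\]
so $h=0$ $\bar\mu$-a.e.\ off $\mathcal I$ by the differentiation theorem for Radon measures. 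Therefore $\bar\nu=\sum_{j\in\mathcal I}\nu_j\delta_{x_j}$, which gives $\nu=|u|^6+\sum_j\nu_j\delta_{x_j}$.

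\emph{Step 4 (the atoms of $\mu$ and summability).} Apply Step 2 with $\varphi$ concentrating at $x_j$, i.e.\ $\varphi\equiv1$ near $x_j$ with support shrinking to $\{x_j\}$. This gives $S\nu_j^{1/3}\le\bar\mu(\{x_j\})=:\mu_j$. Since the measure $|\nabla u|^2$ has no atoms, $\mu\ge|\nabla u|^2+\sum_j\mu_j\delta_{x_j}$. Finally,
\[
\sum_j\nu_j^{1/3}\le S^{-1}\sum_j\mu_j\le S^{-1}\mu(\mathbb R^3)\le S^{-1}\sup_n\|u_n\|^2<\infty .
\]
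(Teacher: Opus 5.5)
Your proof is correct. The paper does not prove this lemma at all: it is quoted from the literature (it is Lions' second concentration--compactness lemma). Its hypotheses are left implicit, and the gradient is missing from the definition of $S$; your reading of both is the right one.

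What you give is the standard Lions argument, made self-contained for the bounded-domain setting:
\begin{itemize}
\item Brezis--Lieb and the expansion of $|\nabla u_n|^2$ reduce everything to the defect measures $\bar\mu=\mu-|\nabla u|^2$ and $\bar\nu=\nu-|u|^6$.
\item Sobolev applied to $\varphi v_n$, together with Rellich on $\Omega_r$, gives the reverse H\"older inequality $S(\int|\varphi|^6\,\d\bar\nu)^{1/3}\le\int|\varphi|^2\,\d\bar\mu$.
\item Radon--Nikodym plus Besicovitch differentiation, using $\bar\nu(B)\le S^{-3}\bar\mu(B)^3$, shows that $\bar\nu$ is purely atomic.
\end{itemize}

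A few small points are worth making explicit.
\begin{itemize}
\item The passage from $C_c^\infty$ test functions to Borel sets is monotone approximation of open sets ($\varphi_k\uparrow 1_U$ with $0\le\varphi_k\le 1$), followed by outer regularity of the finite measures.
\item Since you index by the atoms of $\bar\mu$, some $\nu_j$ may vanish. Discard those if you want the $\nu_j$ strictly positive, as the statement suggests.
\item The identity $\mu(\{x_j\})=\bar\mu(\{x_j\})$, which holds because $|\nabla u|^2\,\d x$ has no atoms, is what identifies your $\mu_j$ with the atoms of $\mu$.
\end{itemize}

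Compared with citing the general whole-space result, your version uses the fact that every measure involved is supported in the compact set $\overline{\Omega_r}$. Consequently $\mu_\infty=\nu_\infty=0$ trivially here, and the companion statement about mass at infinity is not needed.
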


\begin{lemma}[\cite{Chabrowski1995}]\label{lem:3.2}
Let $\{u_n\}$ be a bounded sequence in Lemma \ref{lem:3.1} and define
\begin{align*}
 \mu_{\infty}&:=\lim_{R\to\infty}\limsup_{n\to\infty}\int_{|x|>R}|\nabla u_n|^2\d x,\\
\nu_{\infty}&:=\lim_{R\to\infty}\limsup_{n\to\infty}\int_{|x|>R}|u_n|^6\d x.
\end{align*}
Then
\begin{align*}
\limsup_{n\to\infty}|\nabla u_n|^2_2&=\int_{\mathbb{R}^3}\d \mu + \mu_{\infty},\\
\limsup_{n\to\infty}|u_n|^6_6&=\int_{\mathbb R^3}\d \nu+\nu_{\infty}.
\end{align*}
Furthermore,
\[
S\nu^{\frac13}_{\infty}\leqslant \mu_{\infty}.
\]
\end{lemma}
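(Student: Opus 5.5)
We prove Lemma~\ref{lem:3.2} by the standard cut-off argument in the setting of Lemma~\ref{lem:3.1}. There $\{u_n\}$ is bounded in $D^{1,2}(\mathbb R^3)$ (extend by zero outside $\Omega_r$) and $u_n\rightharpoonup u$. So, up to a subsequence, $u_n\to u$ in $L^2_{\rm loc}(\mathbb R^3)$ by Rellich, and $|\nabla u_n|^2\rightharpoonup\mu$, $|u_n|^6\rightharpoonup\nu$ weakly-$*$ as measures.

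Fix $\psi\in C^\infty(\mathbb R^3)$ with $0\le\psi\le1$, $\psi=0$ on $\{|x|\le1\}$ and $\psi=1$ on $\{|x|\ge2\}$, and put $\psi_R(x)=\psi(x/R)$. Then $|\nabla\psi_R|\le C/R$, and $\nabla\psi_R$ is supported in the annulus $A_R=\{R\le|x|\le 2R\}$. The quantities $\int_{|x|>R}|\nabla u_n|^2$ and $\int_{|x|>R}|u_n|^6$ decrease in $R$, so the limits defining $\mu_\infty$ and $\nu_\infty$ exist. Since $\mathbf 1_{\{|x|>2R\}}\le\psi_R\le\mathbf 1_{\{|x|>R\}}$, we may replace these tails by $\int\psi_R|\nabla u_n|^2$ and $\int\psi_R|u_n|^6$ without changing the double limits. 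The same holds for $\psi_R^2$ and $\psi_R^6$.

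\emph{Identities.} Split
\[
\int_{\mathbb R^3}|\nabla u_n|^2\d x=\int_{\mathbb R^3}(1-\psi_R)|\nabla u_n|^2\d x+\int_{\mathbb R^3}\psi_R|\nabla u_n|^2\d x .
\]
The weight $1-\psi_R$ is continuous with compact support, so by weak-$*$ convergence the first term tends to $\int(1-\psi_R)\d\mu$ as $n\to\infty$. Taking $\limsup_n$ therefore gives
\[
\limsup_{n}\|\nabla u_n\|_2^2=\int(1-\psi_R)\d\mu+\limsup_n\int\psi_R|\nabla u_n|^2\d x .
\]
Now let $R\to\infty$. The first term tends to $\int\d\mu$ by monotone convergence, since $\mu$ is finite. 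The second tends to $\mu_\infty$ by the sandwich above. The identity for $\nu$ is proved identically.

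\emph{Inequality.} Apply the Sobolev inequality to $\psi_R u_n$:
\[
S\Big(\int\psi_R^6|u_n|^6\d x\Big)^{1/3}\le\int\psi_R^2|\nabla u_n|^2\d x+2\int\psi_R|\nabla\psi_R||u_n||\nabla u_n|\d x+\int|\nabla\psi_R|^2u_n^2\d x .
\]
The main point is to show that the last two terms are negligible after $\limsup_n$ and then $R\to\infty$. For the last term, $A_R$ is bounded, so local compactness gives
\[
\int|\nabla\psi_R|^2u_n^2\d x\to\int|\nabla\psi_R|^2u^2\d x \quad (n\to\infty).
\]
By H\"older on $A_R$, using $|A_R|\le CR^3$ and $|\nabla\psi_R|^2\le C R^{-2}$,
\[
\int|\nabla\psi_R|^2u^2\d x\le CR^{-2}|A_R|^{2/3}\Big(\int_{A_R}u^6\d x\Big)^{1/3}\le C\Big(\int_{|x|>R}u^6\d x\Big)^{1/3}\to0 ,
\]
because $u\in L^6$. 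This scale-invariant choice of cut-off is exactly what makes the estimate work. The cross term is bounded by
\[
2\sup_n\|\nabla u_n\|_2\Big(\int|\nabla\psi_R|^2u_n^2\d x\Big)^{1/2},
\]
which vanishes in the same double limit.

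Hence $S\big(\limsup_n\int\psi_R^6|u_n|^6\big)^{1/3}\le\limsup_n\int\psi_R^2|\nabla u_n|^2+o_R(1)$. By the sandwich remark, letting $R\to\infty$ yields $S\nu_\infty^{1/3}\le\mu_\infty$. In the bounded-domain application, $u_n$ vanishes outside $\Omega_r$, so in fact $\mu_\infty=\nu_\infty=0$; this is consistent with the general statement.
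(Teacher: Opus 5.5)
Your proof is correct. It is the standard cut-off argument behind the cited result of Chabrowski: you split off $\int(1-\psi_R)|\nabla u_n|^2$ by weak-$*$ convergence, apply the Sobolev inequality to $\psi_R u_n$, and use local compactness together with the scale-invariant cut-off $\psi(x/R)$ (which needs only $u\in L^6(\mathbb R^3)$) to kill the cross terms; the paper itself gives no proof and simply quotes the result. Your closing remark that $\mu_\infty=\nu_\infty=0$ in the bounded-domain setting is also consistent with how the paper uses the lemma, since all $u_n$ vanish outside $\Omega_r$.
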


To begin with, we revisit the concept of genus. Let $X$ denote a Banach space and let ${\mathfrak B}$ be a subset of $X$.
The set ${\mathfrak B}$ is termed symmetric if $u \in {\mathfrak B}$ implies $-u \in {\mathfrak B}$.

We introduce the collection $\Sigma$ as follows:
\[
\Sigma:=\{\mathfrak B\subset X\backslash\{0\}:\text{ $\mathfrak B$ is closed and symmetric with respect to the origin}\}.
\]

For any $\mathfrak B \in \Sigma$, the genus $\sigma(\mathfrak B)$ is defined by
\[
\sigma(\mathfrak B):=\begin{cases}
0,&\text{if }\mathfrak B=\emptyset,\\
\inf\{k\in\mathbb{N}:\text{ there exists an odd }g\in C(\mathfrak B,\mathbb{R}^k\backslash\{0\})\},\\
+\infty,&\text{if no such odd map exists},
\end{cases}
\]
we further define $\Sigma_k := \{A \in \Sigma | \sigma(A) \geq k\}$.

We now define a mainfold $S_{r,b}$ by
\[
  S_{r,b} := \{u \in H_0^1(\Omega_r) | (u, u)_{L^2} = b^2\},
\]
where the topology on $S_{r,b}$ is inherited from $H_0^1(\Omega_r)$. At any point $v \in S_{r,b}$, the tangent space is given by
\[
  T_{u} S_{r,b} = \{v \in H_0^1 (\Omega_r): (u, v)_{L^2} = 0\}.
\]
For $E \in C^1(H_0^1(\Omega_r), \mathbb R)$,
the restriction $E\big|_{S_{r,b}}$ belongs to $C^1$.
The norm of its derivative at $u \in S_{r,b}$ is
\[
  \|E\big|_{S_{r,b}}(u)\| = \sup_{\|v\|\leq 1, v \in T_{u} S_{r,b}}
  |\langle E'(u), v\rangle|.
\]
Note that $S_{r,b}$ is symmetric with respect to $0$ and $0 \notin S_{r,b}$.
Let $\Sigma(S_{r,b})$ denote the family of closed symmetric subsets of $S_{r,b}$,
and for each $j \in \mathbb N$, set $\Gamma_j := \{A \in \Sigma(S_{r,b})|\sigma(A) \geq j\}$.

\begin{proposition}[\cite{Jeanjean2018}]\label{prop:2.2}
Let $E\in C^1(H_0^1(\Omega_r),\mathbb{R})$ be an even functional. Assume that $\Gamma_j\neq\emptyset$ for each $j\in\mathbb{N}$, $E|_{S_{r,b}}$ is bounded from below and satisfies the $(PS)_d$ condition for all $d<0$. Define
\[
d_j=\inf_{A\in\Gamma_j}\sup_{u \in A}E(u),\quad j=1,2,\ldots,n.
\]
Then:
\begin{enumerate}[label=(\roman*)]
    \item $-\infty<d_1\leqslant d_2\leqslant\cdots\leqslant d_n$ and each $d_j(j=1,2,\ldots,n)$ with $d_j<0$ is a critical value of $E|_{S_{r,b}}$.
    \item If $d:=d_j=d_{j+1}=\cdots=d_{j+l-1}<0$ for some $j$, $l\geqslant 1$, then $\sigma(K_d)\geqslant l$, where $K_d$ denotes the set of critical points of $E|_{S_{r,b}}$ at level $d$. In particular, $E|_{S_{r,b}}$ admits at least $l$ critical points at level $d$.
\end{enumerate}
\end{proposition}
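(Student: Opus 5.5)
The plan is to follow the classical Krasnoselskii--Ljusternik--Schnirelmann scheme, transported to the constraint manifold $S_{r,b}$. The main tool is an equivariant deformation lemma for the even functional $E|_{S_{r,b}}$ at negative levels, which is where the $(PS)_d$ condition is available.

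First, the easy facts in (i). Since $E|_{S_{r,b}}$ is bounded from below, every $d_j\ge \inf_{S_{r,b}}E>-\infty$. Since $\Gamma_{j+1}\subset\Gamma_j$, we get $d_j\le d_{j+1}$. The assumption $\Gamma_j\ne\emptyset$ makes each $d_j$ well defined, with $d_j<+\infty$.

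Second, the deformation lemma, which I expect to be the main technical step. Fix $d<0$ and a closed symmetric neighbourhood $N$ of the critical set $K_d$; we allow $N=\emptyset$ when $K_d=\emptyset$. By $(PS)_d$ there are $\bar\varepsilon>0$ and $\delta>0$ such that $\|E|_{S_{r,b}}'(u)\|\ge\delta$ whenever $|E(u)-d|\le\bar\varepsilon$ and $u\notin N$; otherwise a Palais--Smale sequence would converge into $K_d\subset\operatorname{int}N$.

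On the set of regular points of $E|_{S_{r,b}}$ I build a locally Lipschitz pseudo-gradient field $V$ tangent to $S_{r,b}$. Locally I take $V(u)$ to be the projection of (a pseudo-gradient for) $E'(u)$ onto $T_uS_{r,b}$, and glue these by a locally Lipschitz partition of unity. Then I make the field odd by replacing $V(u)$ with $\tfrac12\bigl(V(u)-V(-u)\bigr)$; this is still a pseudo-gradient because $E$ is even, so $E'(-u)=-E'(u)$.

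Cutting off with an even Lipschitz function that equals $1$ on $\{|E-d|\le\varepsilon\}\setminus N$ and vanishes near $K_d$ and outside $\{|E-d|\le\bar\varepsilon\}$, I obtain a flow. To keep trajectories exactly on $S_{r,b}$, I compose the flow with the retraction $u\mapsto b\,u/\|u\|_2$, or equivalently integrate the vector field on the $C^1$ Hilbert submanifold $S_{r,b}$. Its time-one map $\eta$ is an odd homeomorphism of $S_{r,b}$ satisfying
\[
\eta\bigl(E^{d+\varepsilon}\setminus N\bigr)\subset E^{d-\varepsilon},\qquad E^{c}:=\{u\in S_{r,b}:E(u)\le c\},
\]
for some small $\varepsilon\in(0,\bar\varepsilon)$. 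The delicate points are the tangency of the field and the odd symmetrization; both are standard once $E$ is even and $S_{r,b}$ is symmetric.

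Third, the proof of (i). Suppose $d_j<0$ is not a critical value. Apply the lemma with $N=\emptyset$. Choose $A\in\Gamma_j$ with $\sup_AE\le d_j+\varepsilon$. Since $\eta$ is odd and continuous, $\eta(A)$ is closed and symmetric with $\sigma(\eta(A))\ge\sigma(A)\ge j$. Yet $\sup_{\eta(A)}E\le d_j-\varepsilon$, which contradicts the definition of $d_j$.

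Fourth, the proof of (ii). By $(PS)_d$, the set $K_d$ is compact; it is symmetric, and $0\notin K_d$ because $0\notin S_{r,b}$. Suppose $\sigma(K_d)<l$. By the continuity property of the genus, there is a closed symmetric neighbourhood $N$ of $K_d$ with $\sigma(N)=\sigma(K_d)\le l-1$. Take $A\in\Gamma_{j+l-1}$ with $\sup_AE\le d+\varepsilon$. By subadditivity,
\[
\sigma\bigl(\overline{A\setminus N}\bigr)\ge (j+l-1)-(l-1)=j .
\]
Then $\eta\bigl(\overline{A\setminus N}\bigr)\in\Gamma_j$ and $E\le d-\varepsilon$ on it, contradicting $d_j=d$. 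Hence $\sigma(K_d)\ge l$.

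Finally, the count of critical points. For $l=1$ this says $K_d\neq\emptyset$. For $l\ge2$, any finite symmetric set avoiding $0$ has genus $1$, so $K_d$ must be infinite. In either case $E|_{S_{r,b}}$ has at least $l$ critical points at level $d$.
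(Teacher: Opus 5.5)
The paper gives no proof of this proposition and simply imports it from \cite{Jeanjean2018}, where it rests on the classical genus/equivariant-deformation argument on the symmetric constraint $S_{r,b}$ that you reproduce. Your plan is therefore correct and essentially the standard one, with one small fix in (ii): $\overline{A\setminus N}$ can meet $\partial N$, where your deformation estimate was not asserted, so work instead with the closed set $A\setminus\operatorname{int}N$ (still of genus $\ge j$, since $A\subset (A\setminus\operatorname{int}N)\cup N$) and apply the deformation lemma to a smaller closed symmetric neighbourhood $N'\subset\operatorname{int}N$ of $K_d$.
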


\section{Proof of Theorem \ref{thm:1.1}}

In this section we study solutions of equation (\ref{eq:1.1}) with a prescribed $L^2$-norm. Given $b>0$, we work on the constraint manifold
\[
S_{r,b} = \biggl\{u \in H_0^1(\Omega_r):\ \int_{\Omega_r}u^2\d x=b^2\biggr\}.
\]
We consider system (\ref{eq:1.1}) in the space $H_0^1(\Omega_r)\times \mathbb H$. For any $u \in S_{r,b}$, define the energy functional
\begin{align*}
F(u,\phi_\varepsilon) &:=\frac{1}{2}\int_{\Omega_r}|\nabla u|^2\d x+\frac{1}{2}\int_{\Omega_r}\phi_\varepsilon u^2\d x-\frac{1}{4}\int_{\Omega_r}|\nabla\phi_\varepsilon|^2\d x \\
          &\quad-\frac{\varepsilon^4}{8}\int_{\Omega_r}|\nabla\phi_\varepsilon|^4\d x-\frac{1}{q}\int_{\Omega_r}|u|^q\d x-\frac{1}{6}\int_{\Omega_r}|u|^6\d x
\end{align*}
on $H_0^1(\Omega_r)\times \mathbb H$; its critical points correspond to solutions of system (\ref{eq:1.1})-(\ref{eq:1.2}). Following the reduction method introduced in \cite{Benci1998} and \cite{Benci2002}, we reduce it to a single-variable functional. For $u \in S_{r,b}$, define
\begin{align*}
E(u):=F(u,\phi_\varepsilon(u)) &=\frac{1}{2}\int_{\Omega_r}|\nabla u|^2\d x+\frac{1}{4}\int_{\Omega_r}|\nabla\phi_\varepsilon(u)|^2\d x+\frac{3\varepsilon^4}{8}\int_{\Omega_r}|\nabla\phi_\varepsilon(u)|^4\d x \\
                                   &\quad-\frac1q\int_{\Omega_r}|u|^q\d x-\frac{1}{6}\int_{\Omega_r}|u|^6\d x.
\end{align*}

For any $u \in S_{r,b}$, the Gagliardo-Nirenberg-Sobolev inequality and the definition of the best Sobolev constant $S$ yield
\begin{align*}
    E(u)&=\frac{1}{2}\int_{\Omega_r}|\nabla u|^2\d x+\frac{1}{4}\int_{\Omega_r}|\nabla\phi_\varepsilon(u)|^2\d x+\frac{3\varepsilon^4}{8}\int_{\Omega_r}|\nabla\phi_\varepsilon(u)|^4\d x \\
     &\quad-\frac1q\int_{\Omega_r}|u|^q\d x-\frac{1}{6}\int_{\Omega_r}|u|^6\d x \\
    &\geqslant\frac{1}{2}\int_{\Omega_r}|\nabla u|^2\d x-\frac1q\int_{\Omega_r}|u|^q\d x-\frac{1}{6}\int_{\Omega_r}|u|^6\d x \\
    &\geqslant\frac{1}{2}\|\nabla u\|^2_2-\frac1qb^{3-\frac q2}C_{3,q}\|\nabla u\|_2^{\frac{3(q-2)}2}-\frac{1}{6S^3}\|\nabla u\|^6_2 \\
    &:=\mathcal G(\|\nabla u\|_2),
\end{align*}
where
\[
\mathcal G(t) :=\frac{1}{2}t^2-\frac1qb^{3-\frac q2}C_{3,q}t^{\frac{3(q-2)}2}-\frac{1}{6S^3}t^6.
\]
Given that $q \in (2,8/3)$, we can find a constant $b^* > 0$ such that whenever $b \in (0, b^*)$, the function $\mathcal G(t)$ admits a local positive maximum.
Furthermore, there exist positive constants $R_1$ and $R_2$ satisfying $0 < R_1 < R_2 < +\infty$ such that $\mathcal G(t) > 0$ on the interval $(R_1, R_2)$,
whereas $\mathcal G(t) < 0$ when $t \in (0, R_1)$ or $t \in (R_2, +\infty)$.
We now introduce a smooth cutoff function $T \in C^\infty (\mathbb R^+, [0,1])$, such that
\[
T(t):=\begin{cases}
1,&\text{if }t\leqslant R_1,\\
0,&\text{if }t\geqslant R_2.
\end{cases}
\]
Define the truncated functional
\begin{align*}
E^T(u)&:=\frac{1}{2}\int_{\Omega_r}|\nabla u|^2\d x+\frac{1}{4}\int_{\Omega_r}|\nabla\phi_\varepsilon(u)|^2\d x+\frac{3\varepsilon^4}{8}\int_{\Omega_r}|\nabla\phi_\varepsilon(u)|^4\d x \\
           &\quad-\frac1q\int_{\Omega_r}|u|^q\d x-\frac{T(\|\nabla u\|_2)}{6}\int_{\Omega_r}|u|^6\d x,
\end{align*}
which is of class $C^1$. By construction,
\[
E^T(u)\geqslant\frac{1}{2}\|\nabla u\|^2_2-\frac1qb^{\frac{6-q}{2}}C_{3,q}\|\nabla u\|_2^{\frac{3(q-2)}{2}}-\frac{T(\|\nabla u\|_2)}{6S^3}\|\nabla u\|^6_2.
\]
Define
\[
\mathcal G^T(t):=\frac{1}{2}t^2-\frac1qb^{\frac{6-q}{2}}C_{3,q}t^{\frac{3(q-2)}{2}}-\frac{T(t)}{6S^3}t^6.
\]
For $b \in (0, b^*)$, we have $\mathcal G^T(t)<0$ for all $t\in(0,R_1)$ and $\mathcal G^T(t)>0$ for $t\in(R_1,+\infty)$.

Without loss of generality, we can choose $R_1 > 0$ small enough, such that
\begin{equation}\label{eq:3.1}
\frac{1}{2}r_0^2-\frac{1}{6S^3}r_0^6\geqslant 0\quad\text{for any }r_0\in[0,R_1]\text{ and }R_1^2<S^{\frac32}.
\end{equation}

\begin{lemma}\label{lem:3.1*}
    The functional $E^T$ possesses the following properties:
    \begin{enumerate}[label = \textup{(\roman*)}]
      \item $E^T \in C^1(H_0^1(\Omega_r),\mathbb R)$;
      \item $E^T$ is coercive and bounded from below on $S_{r,b}$.
      Moreover, if $E^T(u) \leq 0$, then $\|\nabla u\|_2 \leq R_1$,
      and $E^T(u) = E(u)$;
      \item $E^T$ satisfies the $(PS)_d$ condition on $S_{r,b}$.
    \end{enumerate}
\end{lemma}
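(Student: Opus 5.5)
I will verify the three items in order. The first two are direct; the substantive part is (iii), which needs the concentration-compactness principle.

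\textbf{Item (i).} For $C^1$ regularity I will check each term of $E^T$ separately.
\begin{itemize}
\item[] The Dirichlet and $L^q$ terms are standard.
\item[] The map $u\mapsto \frac14\|\nabla\phi_\varepsilon(u)\|_2^2+\frac{3\varepsilon^4}{8}\|\nabla\phi_\varepsilon(u)\|_4^4$ has derivative $u\mapsto \phi_\varepsilon(u)u$. This follows from the reduction method of \cite{Benci1998,Benci2002}: write $E(u)=F(u,\phi_\varepsilon(u))$ with $\partial_\phi F=0$. The continuity of $\phi_\varepsilon$ from the continuity lemma (the one preceding Lemma~\ref{lem:2.2*}), together with Lemma~\ref{lem:2.2*}(c), gives continuity of this derivative.
\item[] The truncated critical term is a product of $T(\|\nabla u\|_2)$ and $\int|u|^6$. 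Since $\|\nabla u\|_2>0$ on $S_{r,b}$ and $T$ is smooth and constant near $0$, this product is $C^1$.
\end{itemize}

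\textbf{Item (ii).} I will compare with $\mathcal G^T$.
\begin{itemize}
\item[] We have $E^T(u)\ge \mathcal G^T(\|\nabla u\|_2)$. For $t\ge R_2$ the truncation vanishes, so $\mathcal G^T(t)=\frac12t^2-Ct^{3(q-2)/2}$. Because $3(q-2)/2<1<2$, this tends to $+\infty$, which gives coercivity.
\item[] $\mathcal G^T$ is continuous on $[0,R_2]$, so it is bounded below there. Hence $E^T$ is bounded below.
\item[] If $E^T(u)\le0$, then $\mathcal G^T(\|\nabla u\|_2)\le0$. Since $\mathcal G^T>0$ on $(R_1,\infty)$, this forces $\|\nabla u\|_2\le R_1$. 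Then $T=1$, so $E^T(u)=E(u)$.
\end{itemize}

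\textbf{Item (iii).} As in Proposition~\ref{prop:2.2}, it suffices to treat levels $d<0$. Let $\{u_n\}\subset S_{r,b}$ with $E^T(u_n)\to d<0$ and $\|E^T|_{S_{r,b}}'(u_n)\|\to0$.
\begin{itemize}
\item[] \emph{Boundedness and reduction.} Coercivity gives boundedness. For $n$ large, $E^T(u_n)<0$, so by (ii) $\|\nabla u_n\|_2\le R_1$ and $T\equiv1$ near $u_n$. Hence $\{u_n\}$ is a (PS) sequence for $E$ itself.
\item[] \emph{Lagrange multipliers.} There are $\lambda_n$ with $E'(u_n)+\lambda_n u_n\to0$ in $H^{-1}$. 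Testing with $u_n$ and dividing by $b^2$ shows $\lambda_n$ is bounded.
\item[] \emph{Weak limit.} Pass to $u_n\rightharpoonup u$ in $H_0^1$. Since $\Omega_r$ is bounded, Rellich gives $u_n\to u$ in $L^p$ for $p<6$, so $u\in S_{r,b}$. Also $\lambda_n\to\lambda$. By Lemma~\ref{lem:2.2*}, the Poisson terms converge.
\end{itemize}

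\textbf{Main obstacle: convergence of $\int|u_n|^6$.} I will rule out concentration using Lemma~\ref{lem:3.1}, applied to $u_n$ extended by zero. Lemma~\ref{lem:3.2} is not needed, since $\mu_\infty=\nu_\infty=0$ on a bounded domain.
\begin{itemize}
\item[] Fix an atom $x_j$. Take a cutoff $\psi_\rho$ concentrated at $x_j$ and test $E'(u_n)+\lambda_nu_n$ with $\psi_\rho u_n$.
\item[] Let $n\to\infty$ and then $\rho\to0$. The $L^q$, mass and Poisson terms vanish by strong $L^p$ convergence and the $L^\infty$ bound on $\phi_\varepsilon(u_n)$. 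The cross term $\int u_n\nabla u_n\cdot\nabla\psi_\rho$ vanishes by the usual Hölder estimate. This yields $\mu_j\le\nu_j$.
\item[] Combined with $S\nu_j^{1/3}\le\mu_j$, a nonzero atom must satisfy $\nu_j\ge S^{3/2}$, and hence $\mu_j\ge S^{3/2}$.
\item[] On the other hand, $\mu_j\le\liminf\|\nabla u_n\|_2^2\le R_1^2<S^{3/2}$ by \eqref{eq:3.1}. This is a contradiction, so $\mathcal I=\emptyset$.
\end{itemize}

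\textbf{Conclusion.} Since there are no atoms, $\int|u_n|^6\to\int|u|^6$, and Brezis--Lieb gives $u_n\to u$ in $L^6$. Now test $E'(u_n)+\lambda_nu_n$ and $E'(u)+\lambda u$ against $u_n-u$ and subtract. Every term except the Dirichlet term tends to zero, so $\|\nabla(u_n-u)\|_2\to0$. Thus $u_n\to u$ in $H_0^1$, which proves $(PS)_d$.
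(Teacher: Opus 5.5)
Your proposal is correct and follows essentially the same route as the paper. For (ii) you use the comparison $E^T(u)\ge\mathcal G^T(\|\nabla u\|_2)$. For (iii) you reduce to $E$ via $\|\nabla u_n\|_2<R_1$, bound the Lagrange multipliers, apply Lions' concentration-compactness with the test functions $\psi_\rho u_n$ to force any atom to satisfy $\mu_j\ge S^{3/2}>R_1^2$, and obtain strong $H_0^1$ convergence by testing against $u_n-u$. Your version is in fact cleaner: it handles an arbitrary $(PS)_d$ sequence with $d<0$ and omits the scaling, Ekeland and $u\neq0$ steps, which the paper includes but which are not needed for the compactness claim.
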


\begin{proof}

(i) follows by standard arguments.

For (ii), if $\|\nabla u\|_2 \to +\infty$,
we have
\[
    E^T(u) \geq \frac12 \|\nabla u\|_2^2 - \frac 1q b^{\frac{6-q}{2}} C_{3,q}
    \|\nabla u\|_2^{\frac{3(q-2)}{2}} - \frac{T(\|\nabla u\|_2)}{6S^3} \|\nabla u\|_2^6 \to +\infty,
\]
which shows coercivity.
From the definition of $\mathcal G^T(t)$,
we obtain $E^T(u) \geq \mathcal G^T(\|\nabla u\|_2) = \mathcal G(\|\nabla u\|_2)$
when $\|\nabla u\|_2 \in [0, R_1]$.
Thus $E^T$ is bounded from below on $S_{r,b}$.
Note that if $\|\nabla u\|_2 > R_1$, since $\mathcal G^T(t) > 0$ for $t \in (R_1, +\infty)$,
we can know that $E^T(u) > 0$. It is a contradiction.
So, we can know if $E^T(u) \leq 0$, then $\|\nabla u\|_2 \leq R_1$.

For (iii), set
\[
    d:= \inf_{u \in S_{r,b}} E^T(u).
\]
For $u \in S_{1,b}$, consider the scaling
\[
    u_l(x) := l^{\frac32} u(lx) \quad \text{for } l \in \mathbb R,
\]
which preserves the $L^2$-norm for taking $r = \frac1l$:
\[
    \|u_l(x)\|_2 = \int_{\Omega_r} |l^{\frac32} u(lx)|^2 \d x
= l^3 \int_{\Omega_{\frac1l}} |u(lx)|^2 \d x
= \int_\Omega |u(y)|^2 \d y = b^2.
\]
Using H\"older's inequality and the embeddings $H_0^1(\Omega_r) \hookrightarrow L^6(\Omega_r)
\hookrightarrow L^2(\Omega_r)$, we estimate
\begin{align*}
    \int_{\Omega_r} |\phi_\varepsilon(u_l)|^2 \d x
    & \leq \biggl(\int_{\Omega_r}|\phi_\varepsilon(u_l)|^6 \d x\biggr)^{\frac13}
    |\Omega_r|^{\frac23}
    = \biggl(\int_{\Omega_r} |\phi_\varepsilon(u_l)|^6\d x\biggr)^{\frac13}
    (r^3|\Omega|)^{\frac23}\\
    & = r^2 \biggl(\int_{\Omega_r} |\phi_\varepsilon(u_l)|^6\d x\biggr)^{\frac13} |\Omega|^{\frac23}
    \leq r^2 \biggl(S^{-3} \|\nabla \phi_\varepsilon(u_l)\|_2^6\biggr)^{\frac13} |\Omega|^{\frac23}\\
    & = r^2S^{-1} \|\nabla \phi_\varepsilon(u_l)\|_2^2 |\Omega|^{\frac23},
\end{align*}
where $S$ is the constant in the Sobolev embedding $D^{1,2}(\mathbb R^3) \hookrightarrow L^6(\mathbb R^3)$.

Combing this with
\[
    \int_{\Omega_r} |\nabla \phi_\varepsilon(u_l)|^2 \d x \leq \int_{\Omega_r} |\nabla \phi_\varepsilon(u_l)|^2 \d x + \varepsilon^4 \int_{\Omega_r} |\nabla \phi_\varepsilon(u_l)|^4 \d x
= \int_{\Omega_r} u_l^2 \phi_\varepsilon(u_l) \d x,
\]
we obtain
\begin{equation}\label{eq:3.3}
    \int_{\Omega_r} u_l^2 \phi_\varepsilon(u_l) \d x
    \leq \|u_l\|_4^2 (rS^{-\frac12} |\Omega|^{\frac13}) \|\nabla \phi_\varepsilon(u_l)\|_2
    \leq (rS^{-\frac12}|\Omega|^{\frac13}) \|u_l\|_4^2 \biggl(\int_{\Omega_r} u_l^2 \phi_\varepsilon(u_l) \d x\biggr)^{\frac12}.
\end{equation}
Note that
\begin{equation}\label{eq:3.4}
  \begin{aligned}
    \|u_l\|_4^2 & = \biggl(\int_{\Omega_r}|u_l|^4 \d x\biggr)^{\frac12}
    = \biggl(\int_{\Omega_{\frac1l}} |l^{\frac32} u(lx)|^4 \d x\biggr)^{\frac12}\\
    & = l^3 \biggl(\int_{\Omega_{\frac1l}}|u(lx)|^4 \d x\biggr)^{\frac12}
    = l^{\frac32} \biggl(\int_{\Omega_{\frac1l}} |u(x)|^4 \d x\biggr)^{\frac12},
  \end{aligned}
\end{equation}
then, from~\eqref{eq:3.3} and~\eqref{eq:3.4} we get
\[
  \int_{\Omega_r} u_l^2 \phi_\varepsilon (u_l) \d x =
  \int_{\Omega_{\frac1l}} u_l^2 \phi_\varepsilon(u_l) \d x \leq
  \biggl(S^{-\frac12}|\Omega|^{\frac13}\biggr) l^{\frac12} \biggl(\int_\Omega|u|^4 \d x\biggr)^{\frac12}
  \biggl(\int_{\Omega_r} u_l^2\phi_\varepsilon(u_l) \d x\biggr)^{\frac12}.
\]
Setting $\tilde c: = S^{-\frac12} |\Omega|^{\frac13}$,
it follows that
\[
  \int_{\Omega_{\frac1l}} u_l^2 \phi_\varepsilon(u_l) \d x \leq \tilde c^2 l\int_\Omega |u|^4 \d x.
\]
Since
\[
  \int_{\Omega_{\frac1l}} u_l^2 \phi_\varepsilon(u_l) \d x
= \int_{\Omega_{\frac1l}} |\nabla \phi_\varepsilon(u_l)|^2 \d x + \varepsilon^4 \int_{\Omega_{\frac1l}} |\nabla \phi_\varepsilon(u_l)|^4 \d x,
\]
we have
\begin{equation}\label{eq:3.5}
  \int_{\Omega_{\frac1l}} |\nabla \phi_\varepsilon(u_l)|^2 \d x + \varepsilon^4 \int_{\Omega_{\frac1l}} |\nabla\phi_\varepsilon(u_l)|^4 \d x
  \leq \tilde c^2 l\int_{\Omega} |u|^4\d x.
\end{equation}
Now, substitute $u_l \in S_{r,b}$ into $E^T$, \eqref{eq:3.5} gives
\begin{align*}
  E^T(u_l) &
= \frac12 \int_{\Omega_{\frac1l}} |\nabla u_l|^2 \d x
+ \frac14 \int_{\Omega_{\frac1l}} |\nabla \phi_\varepsilon(u_l)|^2 \d x\\
& \quad + \frac{3\varepsilon^4}{8} \int_{\Omega_{\frac1l}} |\nabla\phi_\varepsilon(u_l)|^4 \d x
- \frac1q \int_{\Omega_{\frac1l}} |u_l|^q \d x
- \frac16 \int_{\Omega_{\frac1l}} |u_l|^6\d x\\
& \leq \frac{l^2}{2} \int_{\Omega} |\nabla u|^2 \d x + \biggl(\frac14 + \frac38\biggr) \tilde c l \int_{\Omega} |u|^4 \d x - \frac1q l^{\frac{3q}{2}-3}
\int_\Omega |u|^q \d x - \frac{l^6}{6} \int_\Omega |u|^6 \d x.
\end{align*}
By Ekeland's variational principle, there exists a (PS) sequence $\{u_n\} \subset S_{r,b}$ for $E^T(u)$.
Since $q \in (2, 8/3)$, we can choose $l_0 > 0$ small enough so that $E^T(u_{l_0}(x)) < 0$.

Now fix $r = \frac1{l_0}$ (hence $\Omega_r = \Omega_{\frac1{l_0}}$).
Thus the size of $\Omega_r$ is determined.
For sufficiently large $n$, we have $E^T(u_n) < 0$,
so by (ii) we obtain $\|\nabla u_n\|_2 \leq R_1$ and $E(u_n) = E^T(u_n)$.
Consequently $\{u_n\}$ is a (PS)$_d$ sequence for $E$, i.e.,
\[
  E(u_n) \to d < 0, \quad \|E'|_{S_{r,b}}(u_n)\| \to 0, \quad \text{as } n \to \infty.
\]
For convenience we shall write $\Omega_r$ instead of $\Omega_{\frac1{l_0}}$ from now on.

The sequence $\{u_n\}$ is bounded in $H_0^1(\Omega_r)$,
hence, up to a subsequence (still denoted by $\{u_n\}$), there exists $u \in H_0^1(\Omega_r)$ such that
\begin{equation}\label{eq:3.5*}
  u_n \rightharpoonup u \quad \text{in } H_0^1(\Omega_r), \quad
  u_n \to u \quad \text{in } L^t(\Omega_r), \quad
  \text{for } 2 \leq t < 6.
\end{equation}
Since $q \in (2, 8/3)$, we get
\[
  \lim_{n\to\infty} \int_{\Omega_r} |u_n|^q \d x
= \int_{\Omega_r} |u|^q \d x.
\]
We now show that $u \neq 0$.
Assume by contradiction that $u = 0$. Then
\[
  \lim_{n\to\infty} \int_{\Omega_r} |u_n|^q \d x
= \int_{\Omega_r} |u|^q \d x = 0.
\]
Consequently, from~\eqref{eq:3.1}, we get that
\begin{align*}
  0 > d & = \lim_{n\to \infty} E^T(u_n)\\
  & = \lim_{n\to\infty} \bigg(\frac12 \int_{\Omega_r} |\nabla u_n|^2 \d x + \frac14 \int_{\Omega_r} |\nabla \phi_\varepsilon(u_n)|^2 \d x \\
  & \quad + \frac{3\varepsilon^4}{8} \int_{\Omega_r} |\nabla \phi_\varepsilon(u_n)|^4 \d x - \frac1  q \int_{\Omega_r} |u_n|^q \d x - \frac16 \int_{\Omega_r} |u_n|^6 \d x\bigg)\\
  & \geq \lim_{n\to\infty} \biggl(\frac12\|\nabla u_n\|_2^2 - \frac 1  q\int_{\Omega_r} |u_n|^q \d x - \frac1{6S^3} \|\nabla u_n\|_2^6\biggr)\\
  & \geq -\frac1  q \lim_{n\to\infty} \int_{\Omega_r} |u_n|^q \d x,
\end{align*}
a contradiction. Hence $u \neq 0$.

By Proposition 5.2 in~\cite{Willem2012}, for any $\varphi \in H_0^1(\Omega_r)$,
there exists $\{\lambda_n\}\subset \mathbb R$ such that
\[
  \begin{aligned}
    & \int_{\Omega_r} \nabla u_n \nabla \varphi \d x
    + \int_{\Omega_r} \phi_\varepsilon(u_n) u_n \varphi \d x
    -  \int_{\Omega_r} |u_n|^{q-2} u_n\varphi \d x\\
    & \quad - \int_{\Omega_r} |u_n|^4 u_n \varphi \d x
    - \lambda_n \int_{\Omega_r} u_n \varphi \d x = o_n(1).
  \end{aligned}
\]
Taking $\varphi = u_n$ gives
\begin{equation}\label{eq:3.7*}
  \int_{\Omega_r} |\nabla u_n|^2 \d x
  + \int_{\Omega_r} \phi_\varepsilon(u_n) u_n^2 \d x
  -  \int_{\Omega_r} |u_n|^q \d x
  - \int_{\Omega_r} |u_n|^6 \d x
  = \lambda_n \int_{\Omega_r} u_n^2 \d x = \lambda_n b^2.
\end{equation}
Moreover,
\begin{equation}
  \int_{\Omega_r} |u_n|^q \d x \leq \hat c \|\nabla u_n\|_2^q
\end{equation}
for some constant $\hat c > 0$. From~\eqref{eq:2.5*} we obtain
\begin{equation}\label{eq:3.9*}
  \int_{\Omega_r} \phi_\varepsilon(u_n) u_n^2 \d x
= \int_{\Omega_r} |\nabla \phi_\varepsilon(u_n)|^2 \d x
+ \varepsilon^4 \int_{\Omega_r} |\nabla \phi_\varepsilon(u_n)|^4 \d x \leq C\|u_n\|^4.
\end{equation}
Thanks to the continuous embedding $H_0^1(\Omega_r) \hookrightarrow L^6(\Omega_r)$, the integral term $\int_{\Omega_r} |u_n|^6 \d x$ is bounded. Combining equations~\eqref{eq:3.7*}-\eqref{eq:3.9*} with the boundedness of $\{u_n\}$ in $H_0^1(\Omega_r)$, we conclude that the sequence $\{\lambda_n\}$ is bounded. By passing to a subsequence, we may assume without loss of generality that $\lambda_n \to \lambda \in \mathbb{R}$.
So, $u$ is a weak solution of
\begin{equation}\label{eq:3.10*}
  -\Delta u + \phi_\varepsilon(u) u + \lambda u
= |u|^{q-2} u + |u|^4 u.
\end{equation}
Indeed, for any $\varphi \in H_0^1(\Omega_r)$, the weak convergence $u_n \rightharpoonup u$
in $H_0^1(\Omega_r)$ and the strong convergence $u_n \to u$ in $L^p(\Omega_r)$ for $p \in [2,6)$ imply
\begin{gather}
  \int_{\Omega_r} \nabla u_n \nabla \varphi \d x \to \int_{\Omega_r} \nabla u \nabla \varphi \d x \quad \text{as } n \to \infty,\label{eq:3.11}\\
   \int_{\Omega_r} |u_n|^{q-2} u_n \varphi \d x \to
   \int_{\Omega_r} |u|^{q-2} u \varphi \d x \quad
  \text{as } n \to \infty,\\
  \lambda_n \int_{\Omega_r} u_n \varphi \d x \to \lambda \int_{\Omega_r}
  u \varphi \d x \quad \text{as } n \to \infty,\\
  \int_{\Omega_r} |u_n|^4 u_n \varphi \d x \to \int_{\Omega_r} |u|^4 u \varphi \d x \quad \text{as } n \to \infty,
\end{gather}
and by Lemma~\ref{lem:2.2*} (c),
\begin{equation}\label{eq:3.15}
  \int_{\Omega_r} \phi_\varepsilon(u_n) u_n \varphi \d x \to
  \int_{\Omega_r} \phi_\varepsilon(u) u \varphi \d x.
\end{equation}
Thus~\eqref{eq:3.10*} follows from~\eqref{eq:3.11}-\eqref{eq:3.15}.

We next prove that
\[
  \int_{\Omega_r} |\nabla u_n|^2 \d x \to \int_{\Omega_r} |\nabla u| \d x,
\qquad
  \int_{\Omega_r} |u_n|^6 \d x \to \int_{\Omega_r} |u|^6 \d x.
\]
Extend the functions $u_n$ by zero outside $\Omega_r$ and regard them as elements of $H^1(\mathbb R^3)$.
Then we may apply the concentration-compactness principle.
For sufficiently large $n$, $\|\nabla u_n\| \leq R_1$;
by Lemmas~\ref{lem:3.1} and~\ref{lem:3.2}, there exist positive measures $\mu$, $\nu$,
vanishing outside $\Omega_r$, such that
\[
  |\nabla u_n|^2 \rightharpoonup \mu, \quad
  |u_n|^6 \rightharpoonup \nu \quad
  \text{as } n \to \infty.
\]

\noindent\textbf{Step 1.} We show that the index set $\mathcal{I}$ is empty. Assume by contradiction that $\mathcal{I}\neq\emptyset$. For a fixed $j\in\mathcal{I}$, let $\varphi\in C^{\infty}_0(\mathbb{R}^{N})$ be a cut-off function with $\varphi\in[0,1]$, $\varphi=1$ in $B_{\frac{1}{2}}(0)$, and $\varphi\equiv 0$ in $\mathbb{R}^{N}\backslash B_1(0)$. For $\rho>0$, set
\[
\varphi_{\rho}(x)=\varphi\biggl(\frac{x-x_j}{\rho}\biggr)=\begin{cases}
1,&\text{if }|x-x_j|\leqslant\frac{\rho}{2},\\
0,&\text{if }|x-x_j|\geqslant\rho.
\end{cases}
\]
Since $\{u_n\}$ is bounded in $H^1(\mathbb{R}^{N})$, the sequence $\{\varphi_{\rho}u_n\}$ is also bounded in $H^1(\mathbb{R}^{N})$. Consequently,
\begin{equation}\label{eq:3.16}
\begin{split}
o(1)=\langle E^{\prime}(u_n),u_n\varphi_{\rho}\rangle&=\int_{\mathbb{R}^3}|\nabla u_n|^2\varphi_{\rho}\d x+\int_{\mathbb{R}^3}u_n\nabla u_n\nabla\varphi_{\rho}\d x - \int_{\mathbb{R}^3}|u_n|^q\varphi_{\rho}\d x\\
&\quad+\int_{\mathbb{R}^3}\phi_\varepsilon(u_n)u_n^2\varphi_{\rho}\d x-\int_{\mathbb{R}^3}|u_n|^6\varphi_{\rho}\d x=o_n(1).
\end{split}
\end{equation}
Using the weak convergence $u_n \rightharpoonup u$ in $H_0^1(\Omega_r)$, the strong convergence $u_n \to u$ in $L^p(\Omega_r)$ for $p \in [2,6)$, and the properties of $\varphi$, we obtain
\begin{gather}
\lim_{\rho\to 0}\lim_{n\rightarrow\infty}\int_{\mathbb{R}^3}|u_n|^q\varphi_{\rho}\d x=\lim_{\rho\to 0}\int_{\mathbb{R}^3}|u|^q\varphi_{\rho}\d x=\lim_{\rho\to 0}\int_{|x-x_j|\leqslant\rho}|u|^q\varphi_{\rho}\d x=0,\label{eq:3.6}\\
\lim_{\rho\to 0}\lim_{n\rightarrow\infty}\int_{\mathbb{R}^3}\phi_\varepsilon(u_n)u_n^2 \varphi_\rho\d x=\lim_{\rho\to 0}\int_{\mathbb{R}^3}\phi_\varepsilon(u) u^2 \varphi_\rho\d x=0.\label{eq:3.7}
\end{gather}
Applying H\"older's inequality together with the absolute continuity of Lebesgue integration, we also have
\[
\lim_{\rho\to 0}\lim_{n\rightarrow\infty}\int_{\mathbb{R}^3}u_n\nabla u_n\nabla\varphi_{\rho}\d x=0.
\]
Then, using (\ref{eq:3.5}) and Lemma~\ref{lem:3.1},
\begin{align*}
\lim_{\rho\to 0}\biggl\{\int_{\mathbb{R}^3}|u|^6\varphi_{\rho}\d x+\sum_{j\in\mathcal{I}}\nu_j\delta_{x_j}\varphi_{\rho}\d x\biggr\}
&=\lim_{\rho\to 0}\int_{\mathbb{R}^3}\varphi_{\rho}\d \nu=\lim_{\rho\to 0}\lim_{n\rightarrow\infty}\int_{\mathbb{R}^3}|u_n|^6\varphi_{\rho}\d x \\
&=\lim_{\rho\to 0}\lim_{n\rightarrow\infty}\int_{\mathbb{R}^3}|\nabla u_n|^2\varphi_{\rho}\d x=\lim_{\rho\to 0}\int_{\mathbb{R}^3}\varphi_{\rho}\d \mu\\
&\geqslant\lim_{\rho\to 0}\Biggl\{\int_{\mathbb{R}^3}|\nabla u|^2\varphi_{\rho}+\sum_{j\in\mathcal{I}}\mu_j \delta_{x_j}\varphi_{\rho}\d x\Biggr\}\geqslant\mu_j .
\end{align*}
By absolute continuity of the Lebesgue integral, $\nu_j\geqslant \mu_j $. Lemma \ref{lem:3.1} gives $\mu_j \geqslant S\nu_j^{\frac{1}{3}}$, hence $\mu_j \geqslant S\nu_j^{\frac{1}{3}}\geqslant S\mu_j ^{\frac{1}{3}}$. Therefore
\[
R_1^2\geqslant\lim_{\rho\to 0}\lim_{n\to\infty}\|\nabla u_n\|^2_2\geqslant\lim_{\rho\to 0}\lim_{n\to\infty}\int_{\mathbb{R}^3}|\nabla u_n|^2\varphi_{\rho}\d x=\lim_{\rho\to 0}\int_{\mathbb{R}^3}\varphi_{\rho}\d  m >\mu_j \geqslant S^{\frac32},
\]
contradicting (\ref{eq:3.1}). Thus $\mathcal{I}=\emptyset$, and we conclude $u_n\to u$ in $L^6_{\text{loc}}(\mathbb{R}^3)$, i.e.,
%
\[
  \lim_{n\to\infty} \int_{\Omega_r} |u_n|^6\d x = \int_{\Omega_r} |u|^6 \d x.
  \]
  Since $u_n \rightharpoonup u$, taking $\varphi = u$ as a test function in~\eqref{eq:3.10*}
  and subtracting~\eqref{eq:3.7*}, then
  \begin{align*}
    & \biggl(\int_{\Omega_r} |\nabla u_n|^2 \d x + \int_{\Omega_r} \phi_\varepsilon(u_n)u_n^2 \d x
    - \int_{\Omega_r} |u_n|^q \d x - \int_{\Omega_r} |u_n|^6 \d x
    - \lambda_n \int_{\Omega_r} u_n^2 \d x\biggr)\\
    & \quad - \biggl(\int_{\Omega_r} |\nabla u|^2 \d x
    + \int_{\Omega_r} \phi_\varepsilon(u)u \d x
    -  \int_{\Omega_r} |u|^q \d x - \int_{\Omega_r} |u|^6 \d x
    - \lambda \int_{\Omega_r} u^2 \d x\biggr)
    = o_n(1).
  \end{align*}
  Using $u_n \rightharpoonup u$ in $H_0^1(\Omega_r)$,
  $u_n \to u$ in $L^p(\Omega_r)$ for $p \in [2,6]$, together with Lemma~\ref{lem:2.2*} (a),
  we deduce
  \[
    \lim_{n\to\infty} \int_{\Omega_r} |\nabla u_n|^2 \d x = \int_{\Omega_r} |\nabla u|^2 \d x,
    \]
    i.e., $u_n \to u$ strongly in $H_0^1(\Omega_r)$.
\end{proof}

\begin{lemma}[\cite{Gao2024}]\label{lem:3.4}
Given $k\in\mathbb{N}$, there exist $\varepsilon_k:=\varepsilon(k)>0$ such that $\sigma((E^T)^{-\varepsilon})\geqslant k$ for every $0<\varepsilon\leqslant\varepsilon_k$.
\end{lemma}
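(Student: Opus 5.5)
To prove Lemma~\ref{lem:3.4}, I will show that $(E^T)^{-\varepsilon}:=\{u\in S_{r,b}: E^T(u)\le -\varepsilon\}$ contains a compact symmetric set $A_k$ of genus $k$. Monotonicity of the genus under inclusion then gives the claim. The set $A_k$ will be the $L^2$-sphere of a $k$-dimensional subspace, rescaled into the small-gradient regime where $E^T=E$ and the sublinear-in-mass term $|u|^q$ dominates.

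\textbf{Step 1: the set.} Fix $k$ and choose $k$ linearly independent functions $e_1,\dots,e_k\in C_0^\infty(\Omega_r)$; for instance the first $k$ Dirichlet eigenfunctions, or bumps with disjoint supports. Let $X_k=\mathrm{span}\{e_1,\dots,e_k\}$ and set
\[
S_k:=X_k\cap S_{r,b}.
\]
This set is the boundary of a symmetric bounded neighbourhood of $0$ in the $k$-dimensional space $X_k$, with the $L^2$ norm. By the Borsuk--Ulam property of the genus, $\sigma(S_k)=k$. Since $\Omega_r$ is fixed here, I cannot use dilations $u_l$ as in the proof of Lemma~\ref{lem:3.1*}. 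Instead I will work with the small parameter $b$, or equivalently with the homogeneity in $u$.

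\textbf{Step 2: the energy estimate.} On $X_k$ all norms are equivalent. So for $u\in S_k$ there are constants $c_1,c_2,c_3>0$, depending only on $k$ and $\Omega_r$, with
\[
\|\nabla u\|_2^2\le c_1 b^2,\qquad \int_{\Omega_r}|u|^q\,\d x\ge c_2 b^q,\qquad \int_{\Omega_r}|u|^4\,\d x\le c_3 b^4 .
\]
Using \eqref{eq:2.5*}, the Poisson terms are bounded by
\[
\tfrac14\|\nabla\phi_\varepsilon(u)\|_2^2+\tfrac{3\varepsilon^4}{8}\|\nabla\phi_\varepsilon(u)\|_4^4\le C\|u\|_{12/5}^4\le C' b^4 .
\]
Dropping the nonpositive $|u|^6$ term, I obtain
\[
E^T(u)\le \tfrac{c_1}{2}b^2+C'b^4-\tfrac{c_2}{q}b^q\qquad\text{for all } u\in S_k .
\]

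\textbf{Step 3: the sign.} This is the main obstacle. Because $q>2$, the term $b^q$ is smaller than $b^2$ as $b\to0$, so the right-hand side is not automatically negative. The negativity has to come from the geometry. In the paper's setup $r=1/l_0$ was chosen so that one element has negative energy, which gives $d<0$. I would exploit the same scaling at the level of the whole subspace. I first pick $X_k$ on the unit domain, then transport it to $\Omega_r$ by $u\mapsto u_l$, using the estimate for $E^T(u_l)$ displayed before the choice of $l_0$. Since the gradient term scales like $l^2$, the Poisson term like $l$, and the $q$-term like $l^{3q/2-3}$ with $3q/2-3<1$, the bound is uniform over the compact set $S_k$. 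Hence there exists $l_k$ with
\[
\sup_{u\in S_k}E^T(u_{l_k})<0 .
\]
I would then take $r$ depending on $k$, fixed as in the paper through $l_0\le l_k$, and call $-2\varepsilon_k$ the resulting maximum, which is attained by compactness of $S_k$ and continuity of $E^T$.

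\textbf{Step 4: conclusion.} Let $A_k$ be the image of $S_k$ under $u\mapsto u_{l_k}$. It is an odd homeomorphic image of $S_k$, so $\sigma(A_k)=k$. For every $0<\varepsilon\le\varepsilon_k$ we have $A_k\subset (E^T)^{-\varepsilon}$, and monotonicity of the genus gives
\[
\sigma\bigl((E^T)^{-\varepsilon}\bigr)\ge \sigma(A_k)=k .
\]
By Lemma~\ref{lem:3.1*}(ii), all these elements also satisfy $\|\nabla u\|_2\le R_1$, which is consistent with the truncation.
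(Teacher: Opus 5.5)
The paper gives no proof of this lemma; it is quoted from the cited reference. Your argument is the standard proof of such statements, and it is correct in substance: a $k$-dimensional subspace on the unit domain, the $L^2$-preserving dilation $u\mapsto u_l$, uniform negativity of $E^T$ on the compact sphere $S_k$ via equivalence of norms, and monotonicity of the genus.

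Your Step~3 remark that $r$ must be chosen after $k$ is not a weakness; it is forced. For $\varepsilon>0$, Lemma~\ref{lem:3.1*}(ii) gives $(E^T)^{-\varepsilon}\subset\{\|\nabla u\|_2\le R_1\}$. On the other hand, any closed symmetric $A\subset S_{r,b}$ with $\sigma(A)\ge k$ must meet the $L^2$-orthogonal complement of the first $k-1$ Dirichlet eigenfunctions of $\Omega_r$. Otherwise the $L^2$-projection onto their span would be an odd continuous map $A\to\mathbb R^{k-1}\setminus\{0\}$. Hence such an $A$ contains some $u$ with $\|\nabla u\|_2^2\ge\lambda_k(\Omega)b^2/r^2$, and this exceeds $R_1^2$ once $k$ is large. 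So if $r=1/l_0$ is frozen from a single test function, as in the proof of Lemma~\ref{lem:3.1*}(iii), the lemma can hold only for finitely many $k$. Your choice of $r$ depending on $k$ (and $b$) is what actually supports Theorem~\ref{thm:1.1} for fixed $k$. Nothing else is affected, since the (PS)$_d$ argument for $d<0$ does not use the specific value of $r$.

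There is one slip to repair. With $r=1/l_0$ and $l_0<l_k$, the functions $u_{l_k}$ are supported in $\Omega_{1/l_k}$. That set need not be contained in $\Omega_{1/l_0}$ unless $\Omega$ is star-shaped about $0$, so your $A_k$ need not lie in $S_{r,b}$. Define $A_k$ instead as the image of $S_k$ under $u\mapsto u_{l_0}$. It still lies in the negative sublevel set. Dividing your uniform upper bound by $l^{3q/2-3}$ gives $\frac{c_1b^2}{2}\,l^{5-3q/2}+C\,b^4\,l^{4-3q/2}-\frac{c_2 b^q}{q}$. Both exponents are positive for $q\in(2,8/3)$, so this expression is increasing in $l$, and negativity at $l_k$ persists for every $l\in(0,l_k]$.
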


Introduce the family
\[
\Sigma_k:=\{D\subset S_{r,b}:\ D\text{ is closed and symmetric, }\sigma(D)\geqslant k\},
\]
and define the minimax levels
\[
d_k:=\inf_{D\in\Sigma_k}\sup_{u \in D}E^T(u)>-\infty
\]
for all $k\in\mathbb{N}$. To prove Theorem \ref{thm:1.1}, we set
\[
K_d:=\{u \in S_{r,b}:(E^T)^{\prime}(u)=0,E^T(u)=d\}.
\]

\begin{lemma}\label{lem:3.5}
If $d=d_k=d_{k+1}=\cdots=d_{k+\ell}$, then $\sigma(K_d)\geqslant \ell+1$. In particular, $E^T(u)$ possesses at least $\ell+1$ nontrivial critical points.
\end{lemma}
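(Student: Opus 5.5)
We argue by contradiction and follow the standard Krasnoselskii genus/deformation argument. The only inputs from above are the compactness and sign information in Lemma~\ref{lem:3.1*}: $E^T$ is even, $C^1$, bounded below on $S_{r,b}$ and satisfies $(PS)_d$. First we fix the sign of $d$. By Lemma~\ref{lem:3.4}, for every $j$ there is $\varepsilon_j>0$ with $(E^T)^{-\varepsilon_j}\in\Sigma_j$, so $d_j\le-\varepsilon_j<0$. Hence $d=d_k=\cdots=d_{k+\ell}<0$.

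Next we show that $K_d$ is compact, symmetric and does not contain $0$. Symmetry follows from evenness of $E^T$, and $0\notin S_{r,b}$ trivially. Compactness comes from the $(PS)_d$ condition: any sequence in $K_d$ is a $(PS)_d$ sequence, so it has a convergent subsequence. Moreover $d<0$, so by Lemma~\ref{lem:3.1*}(ii) every $u\in K_d$ satisfies $\|\nabla u\|_2\le R_1$ and $E^T=E$ near $u$. These points are therefore genuine critical points of $E|_{S_{r,b}}$, i.e.\ solutions of \eqref{eq:1.1}.

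Now suppose $\sigma(K_d)\le \ell$. Since $K_d$ is compact, the continuity property of the genus gives a closed symmetric $\delta$-neighbourhood $N=N_\delta(K_d)\subset S_{r,b}$ with $\sigma(N)=\sigma(K_d)\le\ell$. We then apply the equivariant deformation lemma on the $C^1$ Hilbert manifold $S_{r,b}$; this is where $(PS)_d$ is used, since it bounds $\|E^T|_{S_{r,b}}'\|$ from below on $\{|E^T-d|\le\bar\varepsilon\}\setminus N$. The lemma yields $\bar\varepsilon\in(0,-d)$ and an odd homeomorphism $\eta$ of $S_{r,b}$ with
\[
\eta\bigl((E^T)^{d+\bar\varepsilon}\setminus N\bigr)\subset (E^T)^{d-\bar\varepsilon}.
\]
We take $\bar\varepsilon<-d$ so that the deformation stays in the negative sublevel region, where everything is controlled. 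By definition of $d_{k+\ell}$, there is $D\in\Sigma_{k+\ell}$ with $\sup_D E^T\le d+\bar\varepsilon$. Subadditivity of the genus gives
\[
\sigma\bigl(\overline{D\setminus N}\bigr)\ge \sigma(D)-\sigma(N)\ge k+\ell-\ell=k,
\]
and since $\eta$ is odd and continuous, $\sigma\bigl(\eta(\overline{D\setminus N})\bigr)\ge k$. Hence $\eta(\overline{D\setminus N})\in\Sigma_k$. But $\sup E^T$ on this set is at most $d-\bar\varepsilon<d_k$, contradicting the definition of $d_k$. Therefore $\sigma(K_d)\ge\ell+1$.

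A set with genus at least $2$ is infinite. If $\ell=0$, $K_d$ is nonempty. In either case $K_d$ contains at least $\ell+1$ points, and they are nontrivial because $0\notin S_{r,b}$.

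The main obstacle is the deformation step on the constraint manifold. Gradient-flow deformation on $S_{r,b}$ must be done with the tangential (projected) gradient, and the flow must be odd and preserve $S_{r,b}$. I would invoke the version in \cite{Jeanjean2018} (or Proposition~\ref{prop:2.2}(ii) directly, applied to $E^T$), after checking its hypotheses: $\Gamma_j\neq\emptyset$, which holds by Lemma~\ref{lem:3.4}, boundedness from below, and $(PS)_d$ for $d<0$.
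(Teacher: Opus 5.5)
Your proposal is correct and follows the paper's route: $d<0$ from Lemma~\ref{lem:3.4}, compactness of $K_d$ from the $(PS)_d$ condition in Lemma~\ref{lem:3.1*}(iii), and then the genus multiplicity theorem of \cite{Jeanjean2018}, which is Proposition~\ref{prop:2.2}(ii). The only difference is that you write out the standard deformation and subadditivity argument behind that theorem, which the paper simply cites; one small precision there is to deform off the open $\delta$-neighbourhood of $K_d$ rather than the closed set $N$, so that $\overline{D\setminus N}$ really lies in the set being deformed.
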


\begin{proof}
For $\varepsilon > 0$, it is clear that $(E^T)^{-\varepsilon} \subset \Sigma$.
Given any fixed $k \in \mathbb N$, Lemma~\ref{lem:3.4} provides $\varepsilon_k > 0$ such that $(E^T)^{-\varepsilon} \in \Sigma_k$ for
$0 < \varepsilon \leq \varepsilon_k$.
Moreover,
\[
  d_k \leq \sup_{u \in (E^T)^{-\varepsilon_k}} E^T(u) = -\varepsilon_k < 0.
\]

If $0 > d = d_k = d_{k+1} = \cdots = d_{k+\ell}$, part (iii) of Lemma~\ref{lem:3.1*} implies that $E^T$ satisfies the (PS)$_d$ condition on $S_{r,b}$ for $d < 0$. Hence $K_d$ is compact.
The result now follows from the Theorem~2.1 in~\cite{Jeanjean2018}, which guarantees at least $\ell + 1$ non-trivial critical points for $E^T\big|_{S_{r,b}}$.
\end{proof}

\begin{proof}[\textup{\textbf{Proof of Theorem \ref{thm:1.1}}}]
By Lemma~\ref{lem:3.1*} (ii), the critical points of $E^T(u)$ obtained in Lemma \ref{lem:3.5} are precisely critical points of $\phi_\varepsilon$. This completes the proof of Theorem \ref{thm:1.1}.
\end{proof}

\section{Proof of Theorem~\ref{thm:1.2}}

\def \tmp {\phi_{j,\varepsilon}}

For convenient, in this section, for arbitrary $j = 1$, $2$, $3$, $\ldots\,$,~$k$, $(u_{j,\varepsilon}, \phi_{j,\varepsilon})$ are the solutions
of~\eqref{eq:2.1}.
From Lemma~\ref{lem:3.1*}, we can know that if $E^T(u) \leq 0$, then $\|\nabla u\|_2 < R_1$.
Since $E^T(u_{j,\varepsilon}) = d_{j,\varepsilon} < 0$, then we have
\[
  \|\nabla u_{j,\varepsilon}\|_2 < R_1, \quad
  j = 1,\ 2, \ \ldots\,,~k.
\]
Hence, $\{u_{j,\varepsilon}\}$ is bounded uniformly on $\varepsilon$ in $S_{r,b}$.
Additionally, under the meaning of the subsequence, there exists $u_{j,0} \in H_0^1(\Omega_r)$ such that
\begin{equation}\label{eq:circ1.2}
  u_{j,\varepsilon} \rightharpoonup u_{j,0} \quad \text{in } H_0^1(\Omega_r), \quad
  u_{j,\varepsilon} \to u_{j,0} \quad \text{in } L^p(\Omega_r) \quad
  \text{for } p \in [2,6).
\end{equation}
Similarly to the conclusion in Lemma~\ref{lem:3.1*} (iii), we can have
\[
  E^T(u_{j,\varepsilon}) = E(u_{j,\varepsilon}) \to d_{j,0} < 0,\quad
  \|E'\big|_{S_{r,b}} (u_{j,\varepsilon}) \| \to 0, \quad \text{as } \varepsilon \to 0.
\]
Then, combining with the Lagrange multipler theorem, $\forall \psi \in H_0^1(\Omega_r)$, there exists $\{\lambda_{j,\varepsilon}\} \subset \mathbb R$ such that
\begin{equation}\label{eq:4.1prime}
\begin{aligned}
\int_{\Omega_r} \nabla u_{j,\varepsilon} \nabla \psi \d x
& + \int_{\Omega_r} \tmp(u_{j,\varepsilon}) u_{j,\varepsilon} \psi \d x
- \int_{\Omega_r} |u_{j,\varepsilon}|^{q-2} u_{j,\varepsilon} \psi \d x\\
& - \lambda_{j,\varepsilon} \int_{\Omega_r} u_{j,\varepsilon} \psi \d x
- \int_{\Omega_r} |u_{j,\varepsilon}|^4 u_{j,\varepsilon} \psi \d x = o_\varepsilon(1),
\end{aligned}
\end{equation}
Hence, taking $\varphi = u_{j,\varepsilon}$, combining~\eqref{eq:2.5*}, we have
\[
  \lambda_{j,\varepsilon}b^2 = \int_{\Omega_r} |\nabla u_{j,\varepsilon}|^2 \d x
  + \int_{\Omega_r} \tmp(u_{j,\varepsilon}) u_{j,\varepsilon}^2 \d x
  - \int_{\Omega_r} |u_{j,\varepsilon}|^q \d x
  - \int_{\Omega_r} |u_{j,\varepsilon}|^6 \d x = o_\varepsilon (1),
\]
Then, we can know $\{\lambda_{j,\varepsilon}\}$ is bounded, thus $\exists \lambda_{j,0} \in \mathbb R$, such that $\lambda_{j,\varepsilon} \to \lambda_{j,0}$.
Inspired by~\cite{Benmlih2008,Figueiredo2019}, and combining with the result in~\eqref{eq:circ1.2},
we can fully simulate their methods of proof, then obtain when $u_{j,\varepsilon} \rightharpoonup u_{j,0}$ in $L^{6/5} (\Omega_r)$,
\begin{equation}\label{eq:circ2}
  \phi_{j,\varepsilon}(u_{j,\varepsilon}) \to \phi_{j,0}(u_{j, 0})\quad
  \text{in } H_0^1(\Omega_r), \quad \text{and} \quad
  \varepsilon \phi_{j,\varepsilon} (u_{j,\varepsilon}) \to 0
  \quad \text{in } W_0^{1,4}(\Omega_r).
\end{equation}
Note that the weak form of the second equation in system~\eqref{eq:1.1} is given by
\begin{equation}\label{eq:circ3}
  \int_{\Omega_r} \nabla \tmp \nabla \xi + \varepsilon^4 \int_{\Omega_r} |\nabla \tmp|^2 \nabla \tmp \nabla \xi
= \int_{\Omega_r} \xi u_{j,\varepsilon}^2 \d x,\quad
\forall \xi \in \mathbb H.
\end{equation}

Next, we will prove that as $\varepsilon \to 0^+$, equation \eqref{eq:circ3} converges to the weak form of the corresponding limit system
\begin{equation}\label{eq:circ4}
  \int_{\Omega_r} \nabla \phi_{j,0} \nabla \xi \d x = \int_{\Omega_r} \xi u_{j,0}^2 \d x.
\end{equation}
In fact, using~\eqref{eq:circ1.2}, we can easily obtain
\[
  \int_{\Omega_r} \nabla \tmp \nabla \xi \d x
  \to \int_{\Omega_r} \nabla \phi_{j,0} \nabla \xi \d x, \quad
  \int_{\Omega_r} \xi u_{j,\varepsilon}^2 \d x
  \to \int_{\Omega_r} \xi u_{j,0}^2 \d x.
\]
Using Holder's inequality and~\eqref{eq:circ2}, $\forall \xi \in \mathbb H$
\[
  \biggl|\varepsilon^4 \int_\Omega |\nabla \tmp|^2 \nabla\tmp \nabla \xi \d x\biggr|
  \leq \varepsilon^4 \|\nabla \tmp\|_4^3 \|\nabla \xi\|_4
= \varepsilon \cdot (\varepsilon \|\nabla \tmp\|_4)^3
\|\nabla \xi\|_4 \to 0,
\]
Let $\varepsilon \to 0$, then~\eqref{eq:circ4} satisfies, i.e., $\phi_{j,0}$ is the solution of the Poisson equation $-\Delta\phi = u_{j,0}$.

Similar to Lemma~\ref{lem:3.1*}, by using a measure represention Lemma, we can prove $u_{j,\varepsilon} \to u_{j,0}$ in $L^6(\Omega_r)$.
Next, we conclude that for $\forall \psi \in H_0^1(\Omega_r)$, we can get
\begin{equation}\label{eq:circ5}
  \begin{aligned}
    \int_{\Omega_r} \nabla u_{j,0} \nabla \psi \d x
    + \int_{\Omega_r} \phi_{j,0}(u_{j,0}) u_{j,0} \psi \d x
    & - \int_{\Omega_r} |u_{j,0}|^{q-2} u_{j,0} \psi\d x \\
    & - \lambda_{j,0} \int_{\Omega_r} u_{j,0} \psi \d x - \int_{\Omega_r} |u_{j,0}|^4 u_{j,0} \psi \d x = 0.
  \end{aligned}
\end{equation}
In fact, by standard argument, we have
\begin{gather}
  \int_{\Omega_r} \nabla u_{j,\varepsilon} \nabla \psi \d x
  \to \int_{\Omega_r} \nabla u_{j,0} \nabla \psi \d x,
  \label{eq:circ6}\\
  \lambda_{j,\varepsilon} \int_{\Omega_r} u_{j,\varepsilon} \psi \d x
  \to \lambda_{j,0} \int_{\Omega_r} u_{j,0} \psi \d x,
  \label{eq:circ7}\\
  \int_{\Omega_r} |u_{j,\varepsilon}|^4 u_{j,\varepsilon} \psi \d x
  \to \int_{\Omega_r} |u_{j,0}|^4 u_{j,0} \psi \d x.
  \label{eq:circ8}
\end{gather}
Due to $\tmp \to \phi_{j,0}$ in $L^6(\Omega_r)$,
$u_{j,\varepsilon} \to u_{j,0}$ in $L^{\frac{12}{5}}(\Omega_r)$,
using the H\"older inequality, we have
\begin{equation}\label{eq:circ9}
  \int_{\Omega_r} \tmp(u_{j,\varepsilon}) u_{j,\varepsilon} \psi \d x
  \to \int_{\Omega_r} \phi_{j,0}(u_{j,0}) u_{j,0} \psi \d x.
\end{equation}
Combining~\eqref{eq:circ6}-\eqref{eq:circ9}, we can know that~\eqref{eq:circ5}
satisfies, which means $(u_{j,0}, \phi_{j,0})$ is a pair of solution to~\eqref{eq:1.2} with $\lambda = \lambda_{j,0}$.

Finally, we consider the compactness.
In~\eqref{eq:4.1prime}, taking $\psi = u_{j,0}$ and $\psi = u_{j,\varepsilon}$
respectively, we can obtain
\begin{equation}\label{eq:circ10}
  \begin{aligned}
    \int_{\Omega_r} \nabla u_{j,\varepsilon} \nabla (u_{j,\varepsilon} - u_{j,0})\d x
    & + \int_{\Omega_r} \tmp(u_{j,\varepsilon}) u_{j,\varepsilon}
    (u_{j,\varepsilon} - u_{j,0}) \d x
    - \int_{\Omega_r} |u_{j,\varepsilon}|^{q-2} u_{j,\varepsilon}
    (u_{j,\varepsilon} - u_{j,0})\d x\\
    & - \lambda_{j,\varepsilon} \int_{\Omega_r} u_{j,\varepsilon} (u_{j,\varepsilon} - u_{j,0})\d x - \int_{\Omega_r} |u_{j,\varepsilon}|^4 u_{j,\varepsilon}
    (u_{j,\varepsilon} - u_{j,0}) \d x.
  \end{aligned}
\end{equation}
Using the same actions to~\eqref{eq:circ5}, then
\begin{equation}\label{eq:circ11}
  \begin{aligned}
    \int_{\Omega_r} \nabla u_{j,0} \nabla (u_{j,\varepsilon} - u_{j,0})\d x
    & + \int_{\Omega_r} \phi_{j,0}(u_{j,0})u_{j,0} (u_{j,\varepsilon} - u_{j,0})\d x\\
    & - \int_{\Omega_r} |u_{j,0}|^{q-2} u_{j,0}(u_{j,\varepsilon}-u_{j,0})\d x\\
    & -\lambda_{j,0} \int_{\Omega_r} u_{j,0} (u_{j,\varepsilon} - u_{j,0})\d x
    - \int_{\Omega_r} |u_{j,0}|^4 u_{j,0} (u_{j,\varepsilon} - u_{j,0}) \d x.
  \end{aligned}
\end{equation}
Using H\"older inequality, we have
\begin{equation}\label{eq:circ12}
  \int_{\Omega_r} |u_{j,\varepsilon}|^4 u_{j,\varepsilon} (u_{j,\varepsilon} - u_{j,0})\d x
  \leq
  \|u_{j,\varepsilon}\|_6^4 \|u_{j,\varepsilon}\|_6
  \|u_{j,\varepsilon} - u_{j,0}\|_6 \to 0
\end{equation}
and
\begin{equation}\label{eq:circ13}
  \int_{\Omega_r} \tmp(u_{j,\varepsilon}) u_{j,\varepsilon}
  (u_{j,\varepsilon} - u_{j,0}) \d x
  \leq \|\tmp(u_{j,\varepsilon})\|_6\|u_{j,\varepsilon}\|_{\frac32}
  \|u_{j,\varepsilon} - u_{j,0}\|_6 \to 0.
\end{equation}
Repeat the steps above, we can get
\[
  \int_{\Omega_r} |u_{j,0}|^4 u_{j,0} (u_{j,\varepsilon} - u_{j,0}) \d x \to 0
  \quad \text{and} \quad
  \int_{\Omega_r} \phi_{j,0}(u_{j,0}) u_{j,0} (u_{j,\varepsilon}-u_{j,0}) \d x \to 0.
\]
Combining~\eqref{eq:circ10}-\eqref{eq:circ13}, we can get
\[
  \int_{\Omega_r} |\nabla(u_{j,\varepsilon} - u_{j,0})|^2 \d x + \lambda_{j,0}
  \int_{\Omega_r} |u_{j,\varepsilon} - u_{j,0}|^2 \d x = o_\varepsilon(1).
\]
Since $u_{j,\varepsilon} \to u_{j,0}$ in $L^2(\Omega_r)$, then
\[
  \int_{\Omega_r} |\nabla (u_{j,\varepsilon} - u_{j,0})|^2 \d x \to 0.
\]
Hence, $u_{j,\varepsilon} \to u_{j,0}$ in $H_0^1(\Omega_r)$.
The Theorem~\ref{thm:1.2} is proved.

\section*{Declarations}

\paragraph{Acknowledgements}
Li Wang was supported by National Natural Science Foundation of China (Grant Nos. 12161038, 12301584), Jiangxi Provincial Natural Science Foundation (Grant No. 20232BAB201009), Science and Technology Project of Jiangxi Provincial Department of Education (Grant No. GJJ2400901).

\paragraph{Author Contributions}
All authors have accepted reponsibility for the entire content of this version of the manuscript and consented to its submission to the journal, reviewed all the results and approved the final version of the manuscript.
Li Chen proposed research ideas and writing original draft preparation.
Li Wang provided this idea for the study and led the implementation review and revision of the manuscript. All authors have read and agreed to the published version of the manuscript.

\paragraph{Conflict of interest}
The authors declare that they have no conflict of interest.

\end{document}